\documentclass[12pt]{article}

\usepackage[T1]{fontenc}
\usepackage[utf8]{inputenc}
\usepackage[english]{babel}
\usepackage{amsmath,amssymb,amsfonts,amsthm}
\usepackage[scr=rsfs]{mathalpha}
\usepackage{geometry}
\usepackage{microtype}
\usepackage[numbers,sort&compress]{natbib}
\usepackage{mathtools}
\mathtoolsset{showonlyrefs=true}
\usepackage[colorlinks=true,linkcolor=blue,citecolor=blue,urlcolor=blue]{hyperref}

\numberwithin{equation}{section}

\newtheorem{theorem}{Theorem}[section]
\newtheorem{proposition}[theorem]{Proposition}
\newtheorem{lemma}[theorem]{Lemma}

\newtheorem{definition}[theorem]{Definition}
\theoremstyle{remark}
\newtheorem{remark}[theorem]{Remark}

\newcommand{\Pcal}{\mathbb P}

\newcommand{\Lop}{\mathscr L}
\newcommand{\Lag}{\mathcal L}
\newcommand{\Sfun}{\mathcal S}
\newcommand{\dd}{\,\mathrm d}

\title{Hahn-Type Orthogonality for the Resolvent Delta Operator
\texorpdfstring{\(\mathscr L=D(I-D)^{-1}\)}{L=D(I-D)^(-1)}}

\author{%
Youssef Esstafa\textsuperscript{1}
\quad and \quad
Ridha Sfaxi\textsuperscript{2}
\\[1.2ex]
\small \textsuperscript{1}Le Mans Universit\'e, Laboratoire Manceau de Math\'ematiques,\\
\small Avenue Olivier Messiaen, 72085 Le Mans Cedex 09, France.\\
\small Email: \href{mailto:youssef.esstafa@univ-lemans.fr}{\texttt{youssef.esstafa@univ-lemans.fr}}
\\[2ex]
\small \textsuperscript{2}University of Gabes, Faculty of Sciences of Gabes,\\
\small City Erriadh 6072 Zrig, Gabes, Tunisia.\\
\small Email: \href{mailto:ridhasfaxi@gmail.com}{\texttt{ridhasfaxi@gmail.com}}
}

\date{}

\begin{document}
\maketitle

\begin{abstract}
We investigate orthogonality under the shift-invariant resolvent delta operator
\(\mathscr L=D(I-D)^{-1}\). First, we prove a rigidity theorem: a monic
orthogonal polynomial sequence is \(\mathscr L\)-Appell if and only if it is a
translated Laguerre sequence. We then move beyond the Appell setting and show
that the diagonal family associated with
\[
   \mathcal S_\beta
   =\frac{\beta}{\beta+1}\mathcal L^0+\frac{1}{\beta+1}\delta
\]
satisfies the exact Hahn relation
\[
   \mathscr L B_{n+1}(x;\beta)=(n+1)B_n(x;\beta+1).
\]
A second rigidity result shows that a single pointwise relation
\(P_{n+1}(c)=\theta\,\mathscr L P_{n+1}(c)\) already forces the orthogonality
functional to be a translated Laguerre functional or a translated member of
the diagonal family. Connection and structure relations, a semiclassical
Pearson equation, a differential characterization, and a generating function
complete the description. Thus the resolvent operator provides an explicit
setting in which delta-operator lowering and semiclassical orthogonality are
linked by exact structural identities.
\end{abstract}

\noindent\textbf{Keywords.} Orthogonal polynomials; delta operators; Laguerre polynomials; Hahn-type orthogonality; semiclassical functionals.

\medskip
\noindent\textbf{MSC 2020.} Primary 33C45; Secondary 42C05, 05A40.

\section{Introduction}

One of the central themes in the theory of orthogonal polynomials is the
rigidity produced by lowering operators. For the ordinary derivative, Hahn's
theorem \cite{Hahn1935}, closely related to Bochner's characterization by
second-order differential equations \cite{Bochner1929}, shows that requiring a
derived sequence to remain orthogonal forces the original sequence into the
classical framework. This principle connects recurrence relations, Pearson
equations, Rodrigues formulae and differential equations, and it remains a
basic source of classification problems in the subject; see
\cite{Chihara1978,Szego1975}.

A parallel algebraic viewpoint is provided by Sheffer sequences and finite
operator calculus. Meixner's classification \cite{Meixner1934} already
demonstrates how restrictive orthogonality is inside the Sheffer class; a
modern account of its place in the theory is given by Butzer and Koornwinder
\cite{ButzerKoornwinder2019}. Sheffer's work \cite{Sheffer1939} and the finite
operator calculus of Rota, Kahaner and Odlyzko
\cite{RotaKahanerOdlyzko1973}, later systematized by Roman
\cite{Roman1984}, place such questions in the language of delta operators:
shift-invariant operators on the polynomial algebra which lower degree by one
and do not annihilate \(x\). The classical Laguerre calculus is already
visible in this framework. In particular, the operator \(D/(D-1)\), differing
only by a sign from the operator considered below, is the standard
Laguerre-type delta operator in finite operator calculus. What is not supplied
by the formal calculus itself is a Hahn theory: one must determine when
orthogonality is preserved by the lowering operation and what structural
constraints this imposes on the orthogonality functional.

We study this problem for the resolvent delta operator
\begin{equation}\label{eq:def-L-intro}
   \Lop=D(I-D)^{-1}=(I-D)^{-1}D,
\end{equation}
where \(D=d/dx\). Since \(D\) is locally nilpotent on the polynomial algebra
\(\Pcal\),
\[
   (I-D)^{-1}=\sum_{j\ge0}D^j
\]
is a finite sum on each polynomial. Hence \(\Lop\) is well defined,
shift-invariant, lowers degree by one and satisfies \(\Lop x=1\). Our aim is
not to introduce a new symbolic calculus, but to develop the orthogonality
theory specific to this resolvent operator. The algebraic framework of
regular linear functionals due to Maroni \cite{Maroni1991} is particularly
well suited to this purpose because it allows lowering identities, spectral
transformations and Pearson equations to be treated on the same footing. We
use the normalization of Laguerre functionals adopted in
\cite{MaroniTI1994}.

The first phenomenon is a complete rigidity result. For the monic Laguerre
polynomials
\[
   \hat l_n(x;\alpha)=(-1)^n n!L_n^{(\alpha)}(x),
   \qquad \alpha\notin\{-1,-2,\ldots\},
\]
the standard identities
\begin{equation}\label{eq:laguerre-classical-relations-intro}
  D\hat l_{n+1}(x;\alpha)=(n+1)\hat l_n(x;\alpha+1),
  \qquad
  \hat l_n(x;\alpha)=\hat l_n(x;\alpha+1)
      +n\hat l_{n-1}(x;\alpha+1)
\end{equation}
imply
\begin{equation}\label{eq:L-laguerre-intro}
   \Lop\hat l_{n+1}(x;\alpha)=(n+1)\hat l_n(x;\alpha).
\end{equation}
Thus Laguerre polynomials are \(\Lop\)-Appell. We prove the converse:
\emph{among all monic orthogonal polynomial sequences, the
\(\Lop\)-Appell property characterizes the Laguerre family up to translation}.
The proof is direct: the \(\Lop\)-Appell relation fixes the exponential
generating function up to one scalar series, and Favard's three-term
recurrence forces that series to have exactly the Laguerre form. This gives a
self-contained orthogonality classification for the fixed delta operator
\(\Lop\).

The genuinely Hahn-type phenomenon appears when the derived sequence is
required to be orthogonal but not necessarily identical to the original one.
For this purpose we consider the diagonal family introduced in the framework
of generalized coherent pairs by Maroni and Sfaxi
\cite{MaroniSfaxi2000}. It is orthogonal with respect to
\begin{equation}\label{eq:S-beta-intro}
   \Sfun_\beta
   =
   \frac{\beta}{\beta+1}\Lag^0
   +
   \frac{1}{\beta+1}\delta,
   \qquad
   \beta\notin\{0,-1,-2,\ldots\},
\end{equation}
where \(\Lag^0\) is the normalized Laguerre functional of parameter zero and
\(\delta\) is the Dirac functional at the origin. If
\(\{B_n(x;\beta)\}_{n\ge0}\) denotes the corresponding MOPS, our main lowering
identity is
\begin{equation}\label{eq:B-lowering-intro}
   \Lop B_{n+1}(x;\beta)=(n+1)B_n(x;\beta+1),
   \qquad n\ge0.
\end{equation}
Thus the first \(\Lop\)-derived sequence is again orthogonal, but with a
non-trivial parameter shift. This is the central mechanism of the paper: the
point-mass perturbation in \(\Sfun_\beta\) is exactly compatible with the
resolvent lowering operator.

This mechanism is accompanied by a second rigidity statement. We prove that,
for a MOPS with normalized regular functional \(u_0\), the single boundary
condition
\[
   P_{n+1}(c)=\theta\,\Lop P_{n+1}(c),\qquad n\ge0,
\]
forces \(u_0\) to be either a translated Laguerre functional \(\tau_c\Lag^0\)
or a translated diagonal functional
\(\tau_c\Sfun_{-\theta/(\theta+1)}\), subject only to the natural regularity
restrictions on \(\theta\). Unlike the \(\Lop\)-Appell theorem, this result
does not assume that the lowered sequence coincides with the original one.
It therefore gives a precise partial classification under a minimal
pointwise compatibility condition and explains why the diagonal family
emerges naturally from the resolvent calculus.

The rest of the analysis shows that this lowering relation is not an isolated
identity. We derive finite connection formulae with Laguerre polynomials,
three structure relations coupling the original and \(\Lop\)-derived
sequences, an ordinary differential Pearson equation identifying
\(\Sfun_\beta\) as a semiclassical functional of class one, a corresponding
differential characterization, and an exponential generating function whose
\(\Lop\)-action reproduces the parameter shift. These results connect the
delta-operator viewpoint with the functional and semiclassical approaches to
orthogonality. In this direction we use the work of Marcellán and Sfaxi on
weakly regular and semiclassical functionals
\cite{MarcellanSfaxi2007a,MarcellanSfaxi2007}, as well as related structural
results of Sfaxi and Alaya \cite{SfaxiAlaya2006}. The broader relation with
Sobolev and coherent-pair constructions is discussed in
\cite{Iserles1991,AlfaroMarcellanRezola1993}, while lowering operators adapted
to Laguerre-type structures were studied by Aloui, Marcellán and Sfaxi
\cite{Aloui2013}.

The paper is organized as follows. Section~\ref{sec:prelim} introduces the
algebraic setting and the action of \(\Lop\) on polynomials and linear
functionals. Section~\ref{sec:laguerre} proves the rigidity of orthogonal
\(\Lop\)-Appell sequences and the associated functional identities.
Section~\ref{sec:B} develops the diagonal family, proves the exact Hahn
lowering relation \eqref{eq:B-lowering-intro}, establishes the boundary
rigidity theorem, and derives the structure, semiclassical and generating
function characterizations. The final section summarizes the resulting orthogonality picture and the two
rigidity mechanisms established in the paper.

\section{Algebraic setting and the resolvent operator}\label{sec:prelim}

Let \(\Pcal\) be the vector space of polynomials with complex coefficients and let \(\Pcal'\) be its algebraic dual.  The action of \(u\in\Pcal'\) on \(p\in\Pcal\) is denoted by \(\langle u,p\rangle\), and \((u)_n=\langle u,x^n\rangle\) is the \(n\)-th moment.  We shall use the standard operations
\begin{align*}
  \langle pu,q\rangle&=\langle u,pq\rangle,\qquad
  \langle Du,q\rangle=-\langle u,Dq\rangle,\qquad
  \langle\delta_c,q\rangle=q(c),  \\
  \langle\tau_cu,q\rangle&=\langle u,q(x+c)\rangle,
\end{align*}
where \(p,q\in\Pcal\), \(u\in\Pcal'\), \(c\in\mathbb C\), and \(\delta=\delta_0\).  A linear functional is normalized if \((u)_0=1\).

A monic polynomial sequence, abbreviated MPS, is a sequence \(\{P_n\}_{n\ge0}\) with \(\deg P_n=n\) and leading coefficient one.  Its dual sequence \(\{u_n\}_{n\ge0}\subset\Pcal'\) is determined by
\[
   \langle u_n,P_m\rangle=\delta_{nm},\qquad n,m\ge0.
\]
The sequence is a monic orthogonal polynomial sequence, abbreviated MOPS, if there exists a regular linear functional \(u_0\) such that
\[
   \langle u_0,P_nP_m\rangle=0\quad(n\ne m),
   \qquad
   \langle u_0,P_n^2\rangle\ne0\quad(n\ge0).
\]
Equivalently, \(\{P_n\}_{n\ge0}\) satisfies a three-term recurrence
\begin{equation}\label{eq:TTRR-general}
\begin{cases}
P_{n+1}(x)=(x-\beta_n)P_n(x)-\gamma_nP_{n-1}(x),\qquad n\ge1,\\
P_0(x)=1,
\qquad P_1(x)=x-\beta_0,
\end{cases}
\end{equation}
with \(\beta_n\in\mathbb C\) and \(\gamma_n\ne0\).  When \(\{P_n\}\) is orthogonal with respect to \(u_0\), its dual sequence is
\begin{equation}\label{eq:dual-orthogonal}
   u_n=\frac{P_nu_0}{\langle u_0,P_n^2\rangle},
   \qquad n\ge0.
\end{equation}
We shall also use the derivative sequence
\begin{equation}\label{eq:derivative-sequence}
   P_n^{[1]}(x)=\frac{1}{n+1}DP_{n+1}(x),
   \qquad n\ge0.
\end{equation}
If \(\{u_n^{[1]}\}\) denotes its dual sequence, then
\begin{equation}\label{eq:dual-derivative}
   Du_n^{[1]}=-(n+1)u_{n+1},
   \qquad n\ge0.
\end{equation}

We recall the monic Laguerre polynomial sequence \(\{\hat l_n(x;\alpha)\}_{n\ge0}\), with
\(\alpha\notin\{-1,-2,\ldots\}\).  It is orthogonal with respect to the normalized Laguerre functional \(\Lag^\alpha\), whose moments are
\begin{equation}\label{eq:Lag-moments}
   (\Lag^\alpha)_n=\frac{\Gamma(n+\alpha+1)}{\Gamma(\alpha+1)},
   \qquad n\ge0.
\end{equation}
For \(\alpha>-1\),
\begin{equation}\label{eq:Lag-integral}
   \langle\Lag^\alpha,p\rangle=\frac{1}{\Gamma(\alpha+1)}
   \int_0^\infty x^\alpha e^{-x}p(x)\dd x.
\end{equation}
The functional equation of \(\Lag^\alpha\) is
\begin{equation}\label{eq:Lag-Pearson-D}
   (x\Lag^\alpha)' +(x-\alpha-1)\Lag^\alpha=0.
\end{equation}
The monic polynomials satisfy
\begin{equation}\label{eq:Lag-recurrence}
\begin{cases}
\hat l_{n+1}(x;\alpha)=\bigl(x-(2n+\alpha+1)\bigr)\hat l_n(x;\alpha)
     -n(n+\alpha)\hat l_{n-1}(x;\alpha),\quad n\ge1,\\
\hat l_0(x;\alpha)=1,
\qquad \hat l_1(x;\alpha)=x-\alpha-1,
\end{cases}
\end{equation}
and their squared norms are
\begin{equation}\label{eq:Lag-norm}
   \theta_n(\alpha):=\langle\Lag^\alpha,\hat l_n(x;\alpha)^2\rangle
   =n!\frac{\Gamma(n+\alpha+1)}{\Gamma(\alpha+1)}.
\end{equation}
The dual sequence associated with \(\{\hat l_n(x;\alpha)\}\) is therefore
\begin{equation}\label{eq:Lag-dual}
   \Lag_n^\alpha=\theta_n(\alpha)^{-1}\hat l_n(x;\alpha)\Lag^\alpha.
\end{equation}
We shall repeatedly use the elementary identities
\begin{align}
D\hat l_{n+1}(x;\alpha)&=(n+1)\hat l_n(x;\alpha+1),\label{eq:Lag-D}\\
\hat l_n(x;\alpha)&=\hat l_n(x;\alpha+1)+n\hat l_{n-1}(x;\alpha+1),\label{eq:Lag-shift}\\
x\hat l_n(x;\alpha+1)&=\hat l_{n+1}(x;\alpha)+(n+\alpha+1)\hat l_n(x;\alpha),\label{eq:Lag-x}\\
xD^2\hat l_n(x;\alpha)&-(x-\alpha-1)D\hat l_n(x;\alpha)=-n\hat l_n(x;\alpha).\label{eq:Lag-ODE}
\end{align}
and the explicit value
\begin{equation}\label{eq:Lag-at-zero}
   \hat l_n(0;\alpha)=(-1)^n\frac{\Gamma(n+\alpha+1)}{\Gamma(\alpha+1)}.
\end{equation}

We now record the elementary properties of \(\Lop\).  Since \(D\) is locally nilpotent on \(\Pcal\), the inverse \((I-D)^{-1}\) is the finite sum \(\sum_{j\ge0}D^j\) on every polynomial.  Hence
\begin{equation}\label{eq:L-series}
   \Lop p=\sum_{j\ge1}D^jp,
   \qquad p\in\Pcal.
\end{equation}
In particular,
\begin{equation}\label{eq:L-monomial}
   \Lop(x^n)=\sum_{k=0}^{n-1}\frac{n!}{k!}x^k
   =nx^{n-1}+n(n-1)x^{n-2}+\cdots+n!,
   \qquad n\ge1.
\end{equation}
Thus \(\Lop\) lowers degree by one and \(\Lop x=1\).  Moreover, \(\Lop\) commutes with \(D\), and consequently with every translation.

On \(\Pcal'\) we use the dual convention
\begin{equation}\label{eq:L-duality}
   \langle\Lop u,p\rangle=-\langle u,\Lop p\rangle,
   \qquad u\in\Pcal',\ p\in\Pcal.
\end{equation}
Combining \eqref{eq:L-series} with \(\langle D^ju,p\rangle=(-1)^j\langle u,D^jp\rangle\), one obtains the useful expansion
\begin{equation}\label{eq:L-dual-series}
   \Lop u=\sum_{j\ge1}(-1)^{j+1}D^ju
   =Du-D^2u+D^3u-\cdots.
\end{equation}
The map \(\Lop:\Pcal\to\Pcal\) is onto.  Indeed, it sends \(x^{n+1}/(n+1)\) to \(x^n\) plus lower degree terms.  It follows from \eqref{eq:L-duality} that \(\Lop\) is injective on \(\Pcal'\): if \(\Lop u=0\), then \(\langle u,\Lop p\rangle=0\) for every \(p\), hence \(u=0\).

\begin{lemma}\label{lem:L-product}
For every \(p\in\Pcal\) and every \(u\in\Pcal'\),
\begin{align}
   \Lop(xp)&=x\Lop(p)+(I+\Lop)^2p,\label{eq:L-product-poly}\\
   \Lop(xu)&=x\Lop(u)+(I-\Lop)^2u.\label{eq:L-product-dual}
\end{align}
\end{lemma}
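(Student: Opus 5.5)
The plan is to reduce both identities to the commutator \([D,x]=I\) and the formal power series identity \(\Lop=D(I-D)^{-1}\). First, for polynomials: since \(\Lop=\sum_{j\ge1}D^j\), we only need the commutator of \(D^j\) with multiplication by \(x\). Leibniz gives \(D^j(xp)=xD^jp+jD^{j-1}p\). Hence
\[
\Lop(xp)-x\Lop p=\sum_{j\ge1}jD^{j-1}p .
\]
It then remains to recognize \(\sum_{j\ge1}jD^{j-1}=(I-D)^{-2}\) (a finite sum on each polynomial). We also need \((I-D)^{-1}=I+\Lop\), since \(I+\sum_{j\ge1}D^j=\sum_{j\ge0}D^j\). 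Then \((I-D)^{-2}=(I+\Lop)^2\), which is \eqref{eq:L-product-poly}. Equivalently, this is the formal derivative rule \([f(D),x]=f'(D)\) with \(f(t)=t/(1-t)\) and \(f'(t)=(1-t)^{-2}\). All manipulations are legitimate because \(D\) is locally nilpotent on \(\Pcal\), so every series acts as a finite sum.

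Second, for functionals, I would argue by duality rather than redo the series on \(\Pcal'\). This avoids any worry about convergence of \(\sum (-1)^{j+1}D^ju\), although it is also finite when tested on a polynomial. For \(p\in\Pcal\), use \eqref{eq:L-duality} and \(\langle xu,q\rangle=\langle u,xq\rangle\):
\[
\langle\Lop(xu),p\rangle=-\langle u,x\Lop p\rangle .
\]
By \eqref{eq:L-product-poly} applied to \(p\), we have \(x\Lop p=\Lop(xp)-(I+\Lop)^2p\). Therefore
\[
\langle\Lop(xu),p\rangle=-\langle u,\Lop(xp)\rangle+\langle u,(I+\Lop)^2p\rangle .
\]
The first term equals \(\langle\Lop u,xp\rangle=\langle x\Lop u,p\rangle\).

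For the second term, transpose \(I+\Lop\) twice using \eqref{eq:L-duality}. The transpose of \(\Lop\) is \(-\Lop\), so the transpose of \(I+\Lop\) is \(I-\Lop\). This gives \(\langle u,(I+\Lop)^2p\rangle=\langle (I-\Lop)^2u,p\rangle\). Since \(p\) is arbitrary, \eqref{eq:L-product-dual} follows.

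The main point needing care is the sign bookkeeping in the dual step. Multiplication by \(x\) is self-transpose, while \(\Lop\) transposes to \(-\Lop\), and this is exactly what turns \((I+\Lop)^2\) into \((I-\Lop)^2\). As a consistency check, I would verify the dual identity directly from \eqref{eq:L-dual-series}. Using \(D(xu)=xDu+u\), i.e. \([D,x]=I\) on \(\Pcal'\), the formal commutator becomes \(g'(D)\) with \(g(t)=t/(1+t)\). Then \(g'(t)=(1+t)^{-2}\), and \((I+D)^{-1}=I-\Lop\) on \(\Pcal'\) by \eqref{eq:L-dual-series}, which again yields \((I-\Lop)^2\).
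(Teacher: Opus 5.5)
Your proof is correct and is essentially the paper's argument: the polynomial identity comes from \([D,x]=I\). You apply Leibniz termwise to \(\sum_{j\ge1}D^j\) and sum \(\sum_{j\ge1}jD^{j-1}=(I-D)^{-2}\), while the paper conjugates \((I-D)x=x(I-D)-I\) by the resolvent, which is only a cosmetic difference. The dual identity is obtained exactly as in the paper, by transposition using that \(\Lop\) transposes to \(-\Lop\), so that \((I+\Lop)^2\) becomes \((I-\Lop)^2\).
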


\begin{proof}
Since \(Dxp=xDp+p\), we compute directly from \(\Lop=D(I-D)^{-1}\).  The commutator identity
\([D,x]=1\) gives
\[
   (I-D)x=x(I-D)-1.
\]
Multiplying by \((I-D)^{-1}\) on the left and on the right yields
\[
   (I-D)^{-1}x=x(I-D)^{-1}+(I-D)^{-2}.
\]
Consequently,
\[
\begin{aligned}
 \Lop(xp)
   &=D(I-D)^{-1}(xp)                                      \\
   &=D\{x(I-D)^{-1}p+(I-D)^{-2}p\}                         \\
   &=xD(I-D)^{-1}p+(I-D)^{-1}p+D(I-D)^{-2}p                 \\
   &=x\Lop p+\bigl(I+\Lop\bigr)^2p,
\end{aligned}
\]
because \((I-D)^{-1}=I+\Lop\).  This proves \eqref{eq:L-product-poly}.  For the dual identity, let \(q\in\Pcal\).  Using \eqref{eq:L-duality} and \eqref{eq:L-product-poly},
\[
\begin{aligned}
\langle\Lop(xu)-x\Lop u,q\rangle
   &=-\langle xu,\Lop q\rangle+\langle u,\Lop(xq)\rangle       \\
   &=\langle u,\Lop(xq)-x\Lop q\rangle                         \\
   &=\langle u,(I+\Lop)^2q\rangle                               \\
   &=\langle (I-\Lop)^2u,q\rangle.
\end{aligned}
\]
This is \eqref{eq:L-product-dual}.
\end{proof}

The following identity is a useful bridge between \(\Lop\) and the Laguerre functional at the origin.

\begin{lemma}\label{lem:Ldelta}
One has
\begin{equation}\label{eq:L0-delta}
   \Lag^0=\delta-\Lop\delta.
\end{equation}
Consequently, for every polynomial \(p\) and every \(t\in\mathbb R\),
\begin{equation}\label{eq:L-integral-representation}
   \Lop p(t)=e^t\int_t^\infty e^{-x}p(x)\dd x-p(t)
   =e^t\int_t^\infty e^{-x}\bigl(p(x)-p(t)\bigr)\dd x.
\end{equation}
\end{lemma}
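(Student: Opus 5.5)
Both identities reduce to one moment computation. Since a functional in \(\Pcal'\) is determined by its moments, I will prove \eqref{eq:L0-delta} by testing both sides on \(x^n\). By the duality convention \eqref{eq:L-duality},
\[
   \langle \delta-\Lop\delta,x^n\rangle
   =\langle\delta,x^n\rangle+\langle\delta,\Lop x^n\rangle
   =\delta_{n0}+(\Lop x^n)(0).
\]
For \(n\ge1\), the monomial formula \eqref{eq:L-monomial} gives \((\Lop x^n)(0)=n!\), the constant term of \(\Lop x^n\); for \(n=0\) we have \(\Lop 1=0\). The right-hand side therefore equals \(n!\) for all \(n\ge0\). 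By \eqref{eq:Lag-moments} with \(\alpha=0\), this is exactly \((\Lag^0)_n=\Gamma(n+1)\), which proves \eqref{eq:L0-delta}.

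Next, I will evaluate \eqref{eq:L0-delta} on an arbitrary \(p\):
\[
   \langle\Lag^0,p\rangle=p(0)+(\Lop p)(0).
\]
Using \eqref{eq:Lag-integral} with \(\alpha=0\), this reads \((\Lop p)(0)=\int_0^\infty e^{-x}p(x)\dd x-p(0)\), which is the case \(t=0\) of \eqref{eq:L-integral-representation}.

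For general \(t\), I will use shift-invariance: \(\Lop\) commutes with translations, as noted after \eqref{eq:L-monomial}. Applying the \(t=0\) identity to \(q(x)=p(x+t)\) gives
\[
   (\Lop p)(t)=(\Lop q)(0)=\int_0^\infty e^{-x}p(x+t)\dd x-p(t).
\]
The substitution \(x\mapsto x-t\) turns this into \(e^t\int_t^\infty e^{-x}p(x)\dd x-p(t)\). Finally, since \(e^t\int_t^\infty e^{-x}\dd x=1\), the term \(p(t)\) can be absorbed into the integral, which yields the second form in \eqref{eq:L-integral-representation}.

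As a cross-check, one could instead verify \eqref{eq:L-integral-representation} directly by repeated integration by parts, which gives \(e^t\int_t^\infty e^{-x}p\dd x=\sum_{j\ge0}D^jp(t)\), and then invoke \eqref{eq:L-series}. There is no real obstacle in this argument. The only points needing care are the sign convention in \eqref{eq:L-duality}, which turns \(-\Lop\delta\) into \(+(\Lop p)(0)\), and the \(n=0\) moment.
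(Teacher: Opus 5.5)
Your proof is correct. For the integral representation you argue exactly as the paper does: evaluate \eqref{eq:L0-delta} on the translate \(x\mapsto p(x+t)\) and change variables. For \eqref{eq:L0-delta} itself you use a different test basis.

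The paper tests \(\delta-\Lop\delta\) on the Laguerre basis \(\{\hat l_n(x;0)\}\). From \eqref{eq:Lag-D} and \eqref{eq:Lag-shift} it gets \(\Lop\hat l_n(x;0)=n\hat l_{n-1}(x;0)\). With \eqref{eq:Lag-at-zero}, the pairing then becomes \((-1)^n n!+n(-1)^{n-1}(n-1)!=0\) for \(n\ge1\). Hence \(\delta-\Lop\delta\) is the normalized functional annihilating all \(\hat l_n(x;0)\), \(n\ge1\), which is \(\Lag^0\).

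You instead test on monomials. You read off \((\Lop x^n)(0)=n!\) as the constant term in \eqref{eq:L-monomial} and match it with \((\Lag^0)_n=n!\) from \eqref{eq:Lag-moments}.

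Your route is more elementary: it uses no Laguerre identities, only the explicit action of \(\Lop\) on \(x^n\) and the moment formula. The paper's route gives a structural reading instead. It shows \eqref{eq:L0-delta} as the dual form of the \(\Lop\)-Appell property of \(\hat l_n(x;0)\). Its \emph{normalized functional annihilating \(P_n\), \(n\ge1\)} argument is also the mechanism reused in Theorem~\ref{thm:point-classification}.

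Your integration-by-parts cross-check, \(e^t\int_t^\infty e^{-x}p(x)\dd x=\sum_{j\ge0}D^jp(t)\), is a third self-contained proof that runs in the opposite direction. It yields \eqref{eq:L-integral-representation} directly from \eqref{eq:L-series}. Setting \(t=0\) and using \eqref{eq:Lag-integral} with \(\alpha=0\) then recovers \eqref{eq:L0-delta}.
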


\begin{proof}
It is enough to test the functional \(\delta-\Lop\delta\) on the Laguerre basis \(\{\hat l_n(x;0)\}\).  For \(n=0\), both sides have mass one.  For \(n\ge1\), \eqref{eq:Lag-D} and \eqref{eq:Lag-shift}, with \(\alpha=0\), imply
\[
   \Lop\hat l_n(x;0)=n\hat l_{n-1}(x;0).
\]
Using this identity, \eqref{eq:L-duality} and \eqref{eq:Lag-at-zero}, we get
\[
\begin{aligned}
\langle\delta-\Lop\delta,\hat l_n(x;0)\rangle
   &=\hat l_n(0;0)+\langle\delta,\Lop\hat l_n(x;0)\rangle  \\
   &=(-1)^nn!+n\hat l_{n-1}(0;0)=0.
\end{aligned}
\]
Thus \(\delta-\Lop\delta\) is the normalized functional annihilating all \(\hat l_n(x;0)\), \(n\ge1\), namely \(\Lag^0\).

For the integral formula, apply \eqref{eq:L0-delta} to the translated polynomial \(x\mapsto p(x+t)\).  Since \(\Lop\) commutes with translations,
\[
   \int_0^\infty e^{-x}p(x+t)\dd x=p(t)+\Lop p(t).
\]
Changing variables gives the first equality in \eqref{eq:L-integral-representation}; the second follows from \(e^t\int_t^\infty e^{-x}\dd x=1\).
\end{proof}

\section{The Laguerre family and \texorpdfstring{\(\mathscr L\)}{L}-classicality}\label{sec:laguerre}

The identities recalled above immediately imply that Laguerre polynomials are lowered by \(\Lop\).

\begin{proposition}\label{prop:Lag-lowering}
For every \(\alpha\notin\{-1,-2,\ldots\}\),
\begin{equation}\label{eq:Lag-L-lowering}
   \Lop\hat l_{n+1}(x;\alpha)=(n+1)\hat l_n(x;\alpha),
   \qquad n\ge0.
\end{equation}
Moreover,
\begin{align}
   \Lop\bigl(x\hat l_n(x;1)\bigr)&=(n+1)x\hat l_{n-1}(x;1),
      \qquad n\ge1,\label{eq:L-xLag1}\\
   D^2\bigl(x\hat l_n(x;1)\bigr)&=(n+1)\Lop\hat l_n(x;1),
      \qquad n\ge0.\label{eq:D2-xLag1}
\end{align}
\end{proposition}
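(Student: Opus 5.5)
The plan is to reduce each identity to the elementary Laguerre relations \eqref{eq:Lag-D}--\eqref{eq:Lag-ODE}. The key tool is that \(I-D\) is invertible on \(\Pcal\), with \((I-D)^{-1}=I+\Lop\), and that \((I-D)\Lop=D\).

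For \eqref{eq:Lag-L-lowering}, I apply \(I-D\) to both sides, so it suffices to prove
\[
D\hat l_{n+1}(x;\alpha)=(n+1)\bigl(\hat l_n(x;\alpha)-D\hat l_n(x;\alpha)\bigr).
\]
By \eqref{eq:Lag-D}, the left side is \((n+1)\hat l_n(x;\alpha+1)\), and the right side is \((n+1)\bigl(\hat l_n(x;\alpha)-n\hat l_{n-1}(x;\alpha+1)\bigr)\). These agree by \eqref{eq:Lag-shift}. The case \(n=0\) is \(\Lop x=1\) after a translation, and it is also covered by the same computation. Since \(I-D\) is injective on \(\Pcal\), the identity follows.

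For \eqref{eq:L-xLag1}, I use Lemma~\ref{lem:L-product} with \(p=\hat l_n(x;1)\). By the first part, \(\Lop p=n\hat l_{n-1}(x;1)\), so \eqref{eq:L-xLag1} is equivalent to
\[
(I+\Lop)^2\hat l_n(x;1)=x\hat l_{n-1}(x;1).
\]
I split this into two applications of \(I+\Lop\).
\begin{itemize}
\item First, \eqref{eq:Lag-shift} with \(\alpha=0\) gives \(\hat l_n(x;1)=\hat l_n(x;0)-n\hat l_{n-1}(x;1)=(I-D)\hat l_n(x;0)\). Hence \((I+\Lop)\hat l_n(x;1)=\hat l_n(x;0)\).
\item Second, \((I+\Lop)\hat l_n(x;0)=\hat l_n(x;0)+n\hat l_{n-1}(x;0)\) by \eqref{eq:Lag-L-lowering}. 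By \eqref{eq:Lag-x} with \(\alpha=0\) and \(n\) replaced by \(n-1\), this equals \(x\hat l_{n-1}(x;1)\).
\end{itemize}
This closes the argument.

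For \eqref{eq:D2-xLag1}, I compute directly:
\[
D^2\bigl(x\hat l_n(x;1)\bigr)=2D\hat l_n(x;1)+xD^2\hat l_n(x;1).
\]
I then eliminate \(xD^2\hat l_n(x;1)\) using the ODE \eqref{eq:Lag-ODE} with \(\alpha=1\), which gives
\[
D^2\bigl(x\hat l_n(x;1)\bigr)=xD\hat l_n(x;1)-n\hat l_n(x;1).
\]
Next, \eqref{eq:Lag-D} gives \(xD\hat l_n(x;1)=nx\hat l_{n-1}(x;2)\). Then \eqref{eq:Lag-x} with \(\alpha=1\) gives \(x\hat l_{n-1}(x;2)=\hat l_n(x;1)+(n+1)\hat l_{n-1}(x;1)\). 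Substituting, the expression becomes \(n(n+1)\hat l_{n-1}(x;1)\), which equals \((n+1)\Lop\hat l_n(x;1)\) by the first part. For \(n=0\) both sides vanish.

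The only step requiring care is the middle identity. There one must notice that \((I+\Lop)^2\) factors as two parameter-lowering steps, \(\alpha=1\to0\) followed by \(x\)-multiplication. Each step is immediate once it is written using \(I+\Lop=(I-D)^{-1}\).
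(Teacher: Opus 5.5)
Your proof is correct. The first identity is handled exactly as in the paper: \eqref{eq:Lag-D} and \eqref{eq:Lag-shift} give \(D\hat l_{n+1}(x;\alpha)=(n+1)(I-D)\hat l_n(x;\alpha)\), and then you invert \(I-D\).

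The other two identities are proved by different routes. For \eqref{eq:L-xLag1} the paper does not use Lemma~\ref{lem:L-product}. It writes \(x\hat l_n(x;1)\) with the three-term recurrence \eqref{eq:Lag-recurrence} at \(\alpha=1\), applies \eqref{eq:Lag-L-lowering} to each term, and recognizes \((n+1)\) times the same recurrence at index \(n-1\). That route is shorter and needs only linearity and the first part. Your route uses the twisted Leibniz rule and the factorization \((I+\Lop)^2=(I-D)^{-2}\), and it isolates the identity \((I-D)\hat l_n(x;0)=\hat l_n(x;1)\), that is, \((I+\Lop)\hat l_n(x;1)=\hat l_n(x;0)\). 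In words, the resolvent lowers the Laguerre parameter by one. This is exactly the identity behind the paper's later check that \((I-D)\Pi_{n+1}=B_{n+1}(x;\beta)\) for \eqref{eq:Pi-def}, used in Theorems~\ref{thm:B-characterization} and~\ref{thm:B-L-Hahn}. So your argument makes visible a mechanism the paper leaves implicit.

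For \eqref{eq:D2-xLag1} the paper differentiates once more the identity \(D\bigl(x\hat l_n(x;1)\bigr)=(n+1)\hat l_n(x;0)\), which follows from \eqref{eq:Lag-x} and \eqref{eq:Lag-shift} at \(\alpha=0\). It therefore never needs the second-order equation. Your use of \eqref{eq:Lag-ODE} at \(\alpha=1\), together with \eqref{eq:Lag-D} and \eqref{eq:Lag-x}, is equally valid, just slightly heavier.
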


\begin{proof}
Combining \eqref{eq:Lag-D} and \eqref{eq:Lag-shift}, we obtain
\[
   D\hat l_{n+1}(x;\alpha)=(n+1)(I-D)\hat l_n(x;\alpha).
\]
Applying \((I-D)^{-1}\) gives \eqref{eq:Lag-L-lowering}.  To prove \eqref{eq:L-xLag1}, use the recurrence \eqref{eq:Lag-recurrence} with \(\alpha=1\):
\[
   x\hat l_n(x;1)=\hat l_{n+1}(x;1)+2(n+1)\hat l_n(x;1)+n(n+1)\hat l_{n-1}(x;1).
\]
Applying \eqref{eq:Lag-L-lowering} to each term gives \eqref{eq:L-xLag1}.  Finally, \eqref{eq:D2-xLag1} follows from
\[
   D^2(x\hat l_n(x;1))=(n+1)n\hat l_{n-1}(x;1),
\]
after differentiating the identity \(D(x\hat l_n(x;1))=(n+1)\hat l_n(x;0)\), and from \eqref{eq:Lag-L-lowering} with \(\alpha=1\).
\end{proof}

\begin{definition}
A monic polynomial sequence \(\{P_n\}_{n\ge0}\) is called \(\Lop\)-Appell if
\begin{equation}\label{eq:L-Appell-def}
   \Lop P_{n+1}=(n+1)P_n,
   \qquad n\ge0.
\end{equation}
It is called \(\Lop\)-Hahn admissible if \(\{P_n\}\) is a MOPS and the sequence
\begin{equation}\label{eq:L-Hahn-derived}
   P_n^{\langle1\rangle}(x)=\frac{1}{n+1}\Lop P_{n+1}(x),
   \qquad n\ge0,
\end{equation}
is also a MOPS.
\end{definition}

\begin{theorem}\label{thm:Lag-Appell-characterization}
Let \(\{P_n\}_{n\ge0}\) be a MOPS.  If \(\{P_n\}\) is \(\Lop\)-Appell, then there exist \(c\in\mathbb C\) and \(\alpha\notin\{-1,-2,\ldots\}\) such that
\begin{equation}\label{eq:Lag-translated}
   P_n(x)=\hat l_n(x-c;\alpha),
   \qquad n\ge0.
\end{equation}
Conversely, every translated Laguerre sequence \eqref{eq:Lag-translated} is \(\Lop\)-Appell and orthogonal.
\end{theorem}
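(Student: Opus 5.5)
The plan follows the route sketched in the introduction: the \(\Lop\)-Appell relation pins down the exponential generating function up to one scalar series, and Favard's recurrence then forces that series to be of Laguerre type.

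\emph{Step 1 (generating function).} Set \(G(x,t)=\sum_{n\ge0}P_n(x)t^n/n!\), a formal power series in \(t\) with polynomial coefficients. Since \(\Lop\) is shift-invariant with symbol \(\varphi(s)=s/(1-s)\), we have \(\Lop e^{xs}=\varphi(s)e^{xs}\) formally. The Appell relation \eqref{eq:L-Appell-def} says \(\Lop G=tG\). Write \(G=A(t)e^{x\psi(t)}\) with \(\psi=\varphi^{-1}\), that is \(\psi(t)=t/(1+t)\). I would verify directly that every \(\Lop\)-Appell MPS has this form, with \(A(t)=\sum a_kt^k/k!\) and \(a_0=1\), so that the sequence is determined by the scalars \(a_k=P_k(0)\). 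Indeed, the relation determines \(P_{n+1}\) from \(P_n\) up to its constant term, because \(\Lop\) is onto with kernel the constants, so \(P_{n+1}(0)\) is the only free datum at each step. For the Laguerre case \(\alpha\), the classical generating function gives \(A(t)=(1+t)^{-\alpha-1}\). Translating by \(c\) multiplies \(A\) by \(e^{-c\psi(t)}\). So the target is to show \(A(t)=(1+t)^{-\alpha-1}e^{-ct/(1+t)}\).

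\emph{Step 2 (exploiting the recurrence).} Apply \(\Lop\) to the recurrence \eqref{eq:TTRR-general} written for index \(n+1\), and use \eqref{eq:L-product-poly}:
\[
\Lop(xP_{n+1})=x(n+1)P_n+(I+\Lop)^2P_{n+1}.
\]
Using the Appell relation, \((I+\Lop)^2P_{n+1}=P_{n+1}+2(n+1)P_n+(n+1)nP_{n-1}\). Substituting \(xP_n=P_{n+1}+\beta_nP_n+\gamma_nP_{n-1}\) and comparing with \(\Lop\) of the right-hand side, which equals \((n+2)P_{n+1}-\beta_{n+1}(n+1)P_n-\gamma_{n+1}nP_{n-1}\), gives after matching coefficients of the basis \(P_{n+1},P_n,P_{n-1}\):
\[
\beta_{n+1}=\beta_n+2,\qquad (n+1)\gamma_n+(n+1)n=n\gamma_{n+1}\quad\text{up to index bookkeeping}.
\]
This is a linear first-order system, so \(\beta_n=\beta_0+2n\) and \(\gamma_n=n(n+\alpha)\) for a single constant \(\alpha\) determined by \(\gamma_1\). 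The low-index case \(n=0\) must be handled separately, using \(P_{-1}=0\), since it ties \(\gamma_1\) to the start of the recursion.

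\emph{Step 3 (identification).} Put \(c=\beta_0-\alpha-1\). Then the recurrence coincides with \eqref{eq:Lag-recurrence} translated by \(c\). Regularity, \(\gamma_n\ne0\), forces \(\alpha\notin\{-1,-2,\dots\}\). Since the recurrence determines the MOPS, \eqref{eq:Lag-translated} follows. The converse is Proposition~\ref{prop:Lag-lowering} combined with the shift-invariance of \(\Lop\) and of orthogonality, the latter holding with respect to \(\tau_{-c}\Lag^\alpha\) or its appropriate sign convention.

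\emph{Main obstacle.} The hardest step is Step 2: correctly tracking indices and checking that the coefficient comparison closes, so that it yields both the \(\beta\) and the \(\gamma\) recursions and no spurious extra constraint. If the bookkeeping becomes unwieldy, I would fall back on the generating-function route. There, \(xG=(1+t)^2\partial_tG\) acting through \(\psi\) turns the three-term recurrence into a first-order ODE for \(A\), namely \(A'/A=-(\alpha+1)/(1+t)-c/(1+t)^2\), which integrates directly to the Laguerre form.
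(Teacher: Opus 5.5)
Your proof is correct, but its main line (Step 2) takes a different route from the paper's.

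\textbf{The paper's route.} The paper never works with the recurrence coefficients directly. It writes \(F(x,t)=A(t)\exp\bigl(xt/(1+t)\bigr)\) and derives \(xF=(1+t)^2\partial_tF-a(t)F\) with \(a=(1+t)^2A'/A\). Uniqueness of the expansion of \(xP_n\), compared with the three-term recurrence, then kills the Taylor coefficients \(a_k\), \(k\ge2\). Integrating \(A'/A=(a_0+a_1t)/(1+t)^2\) gives the translated Laguerre generating function, and \(\gamma_n=n(n+\alpha)\) is read off at the end. This is essentially your fallback.

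\textbf{Your route.} Step 2 instead applies \(\Lop\) to the recurrence and uses the commutator identity \eqref{eq:L-product-poly} of Lemma~\ref{lem:L-product}. The paper proves that identity but does not use it here. The computation closes exactly as you state. Write \((n+2)P_{n+1}=\Lop(xP_{n+1})-(n+1)\beta_{n+1}P_n-n\gamma_{n+1}P_{n-1}\) and expand \(\Lop(xP_{n+1})=(n+2)P_{n+1}+(n+1)(\beta_n+2)P_n+(n+1)(\gamma_n+n)P_{n-1}\). This yields \(\beta_{n+1}=\beta_n+2\) for \(n\ge0\), and \(\gamma_{n+1}/(n+1)=\gamma_n/n+1\) for \(n\ge1\). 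Hence \(\gamma_n=n(n+\gamma_1-1)\). The identification \(c=\beta_0-\alpha-1\), \(\alpha=\gamma_1-1\) is then immediate, and regularity gives \(\alpha\notin\{-1,-2,\ldots\}\).

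\textbf{What each buys.} Your argument is more elementary: no generating function and no integration are needed, and Step 1 becomes pure motivation. It also makes transparent why exactly two free parameters, \(\beta_0\) and \(\gamma_1\), survive. The paper's route first parametrizes all \(\Lop\)-Appell sequences by the single series \(A(t)\), shows that orthogonality cuts this down to a two-parameter family, and produces the explicit generating function along the way.

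\textbf{Minor points.}
\begin{itemize}
\item Your sentence about ``\(\Lop\) of the right-hand side'' conflates the two sides of the identity; use the form displayed above. Your final relations are nonetheless right.
\item The case \(n=0\) gives only \(\beta_1=\beta_0+2\) and imposes no condition on \(\gamma_1\). It is precisely because \(\gamma_1\) is free that \(\alpha\) is a free parameter.
\item With the paper's convention \(\langle\tau_cu,q\rangle=\langle u,q(x+c)\rangle\), the orthogonality functional of \(\hat l_n(x-c;\alpha)\) is \(\tau_c\Lag^\alpha\), not \(\tau_{-c}\Lag^\alpha\).
\end{itemize}
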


\begin{proof}
Assume first that \(\{P_n\}_{n\ge0}\) is \(\Lop\)-Appell. Introduce the formal exponential generating function
\[
   F(x,t)=\sum_{n\ge0}P_n(x)\frac{t^n}{n!}.
\]
The relation \(\Lop P_{n+1}=(n+1)P_n\) is equivalent to
\[
   \Lop_x F(x,t)=tF(x,t).
\]
Since \(\Lop=D(I-D)^{-1}\), applying \(I-D\) to both sides gives
\[
   D_xF(x,t)=t(I-D_x)F(x,t).
\]
Thus
\[
   (1+t)D_xF(x,t)=tF(x,t),
\]
and therefore
\[
   F(x,t)=A(t)\exp\left(\frac{xt}{1+t}\right),
   \qquad A(0)=1.
\]

We now use orthogonality. Since \(\{P_n\}_{n\ge0}\) is a MOPS, it satisfies a
three-term recurrence
\[
   xP_n(x)=P_{n+1}(x)+\beta_nP_n(x)+\gamma_nP_{n-1}(x),
   \qquad n\ge0,
\]
with the convention \(P_{-1}=0\). Set
\[
   a(t)=(1+t)^2\frac{A'(t)}{A(t)}
   =\sum_{k\ge0}a_k\frac{t^k}{k!}.
\]
Since
\[
   \frac{\partial F}{\partial t}
   =
   \left(\frac{A'(t)}{A(t)}+\frac{x}{(1+t)^2}\right)F,
\]
we obtain
\[
   xF(x,t)=(1+t)^2\frac{\partial F}{\partial t}(x,t)-a(t)F(x,t).
\]
Comparing the coefficient of \(t^n/n!\) yields
\[
   xP_n
   =
   P_{n+1}
   +(2n-a_0)P_n
   +\bigl(n(n-1)-na_1\bigr)P_{n-1}
   -
   \sum_{k=2}^{n}\binom{n}{k}a_kP_{n-k}.
\]
Since \(\{P_m\}_{m\ge0}\) is a polynomial basis, the expansion of \(xP_n\)
in this basis is unique. Comparing it with the three-term recurrence shows
that the coefficients of \(P_{n-k}\), \(k\ge2\), must vanish. Hence
\(a_k=0\) for every \(k\ge2\). Consequently
\[
   a(t)=a_0+a_1t.
\]
It follows that
\[
   \frac{A'(t)}{A(t)}
   =
   \frac{a_0+a_1t}{(1+t)^2},
\]
and integration, together with \(A(0)=1\), gives
\[
   A(t)
   =
   (1+t)^{a_1}
   \exp\left(\frac{(a_0-a_1)t}{1+t}\right).
\]
Therefore
\[
   F(x,t)
   =
   (1+t)^{a_1}
   \exp\left(\frac{(x+a_0-a_1)t}{1+t}\right).
\]
Put
\[
   \alpha=-a_1-1,
   \qquad
   c=a_1-a_0.
\]
Then
\[
   F(x,t)
   =
   (1+t)^{-\alpha-1}
   \exp\left(\frac{(x-c)t}{1+t}\right),
\]
which is exactly the exponential generating function of the translated monic
Laguerre sequence
\[
   \hat l_n(x-c;\alpha).
\]
Thus
\[
   P_n(x)=\hat l_n(x-c;\alpha),
   \qquad n\ge0.
\]
Finally, the recurrence coefficient obtained above is
\[
   \gamma_n=n(n+\alpha),
\]
and the regularity of the MOPS implies
\[
   \alpha\notin\{-1,-2,\ldots\}.
\]
This proves the direct implication. Conversely, every translated Laguerre
sequence satisfies the \(\Lop\)-Appell relation by Proposition~\ref{prop:Lag-lowering}
and by the fact that \(\Lop\) commutes with translations. Orthogonality is
preserved by translating the corresponding regular functional. The proof is
complete.
\end{proof}

The preceding theorem clarifies the Appell part of the theory.  We now express the corresponding functional equation directly in the \(\Lop\)-calculus.

\begin{proposition}\label{prop:L-Pearson-Lag}
For \(\alpha\notin\{-1,-2,\ldots\}\), the Laguerre functional satisfies
\begin{equation}\label{eq:L-Pearson-Lag}
   \Lop(\Lag^\alpha)+\left(\frac{x}{\alpha+1}-1\right)\Lag^\alpha=0.
\end{equation}
Equivalently,
\begin{equation}\label{eq:L-Rodrigues-functional}
   \Lop^n(\Lag^\alpha)=(-1)^n\frac{\Gamma(\alpha+1)}{\Gamma(n+\alpha+1)}
   \hat l_n(x;\alpha)\Lag^\alpha,
   \qquad n\ge0.
\end{equation}
\end{proposition}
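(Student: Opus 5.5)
Both identities are equalities of linear functionals, so I would test them on the basis \(\{\hat l_m(x;\alpha)\}_{m\ge0}\), using the duality convention \eqref{eq:L-duality} and the lowering relation \eqref{eq:Lag-L-lowering}. Under the pairing, \(\Lop^n\) acting on a functional becomes \((-1)^n\Lop^n\) acting on the test polynomial. Iterating \eqref{eq:Lag-L-lowering} gives \(\Lop^n\hat l_m(x;\alpha)=\frac{m!}{(m-n)!}\hat l_{m-n}(x;\alpha)\) for \(m\ge n\), and \(0\) for \(m<n\). Since \(\Lag^\alpha\) annihilates \(\hat l_k(x;\alpha)\) for \(k\ge1\), we get
\[
\langle\Lop^n\Lag^\alpha,\hat l_m(x;\alpha)\rangle=(-1)^n\,n!\,\delta_{mn}.
\]

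On the other side, orthogonality and \eqref{eq:Lag-norm} give
\[
\langle\hat l_n(x;\alpha)\Lag^\alpha,\hat l_m(x;\alpha)\rangle=\theta_n(\alpha)\delta_{mn}=n!\,\frac{\Gamma(n+\alpha+1)}{\Gamma(\alpha+1)}\,\delta_{mn}.
\]
Multiplying by the prefactor \((-1)^n\Gamma(\alpha+1)/\Gamma(n+\alpha+1)\) yields exactly \((-1)^n n!\,\delta_{mn}\). The two functionals therefore agree on a basis, which proves \eqref{eq:L-Rodrigues-functional} for every \(n\ge0\). The case \(n=0\) is trivial.

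For \eqref{eq:L-Pearson-Lag}, take \(n=1\) in \eqref{eq:L-Rodrigues-functional}. Using \(\hat l_1(x;\alpha)=x-\alpha-1\), this reads
\[
\Lop\Lag^\alpha=-\frac{x-\alpha-1}{\alpha+1}\Lag^\alpha=-\Bigl(\frac{x}{\alpha+1}-1\Bigr)\Lag^\alpha,
\]
which is \eqref{eq:L-Pearson-Lag}.

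The word ``equivalently'' also asks for the converse direction, namely that \eqref{eq:L-Pearson-Lag} implies \eqref{eq:L-Rodrigues-functional}. Since the basis check already proves both identities outright, this is logically covered. I would still add an inductive argument to explain the equivalence. Assume \(\Lop^n\Lag^\alpha=\kappa_n\hat l_n\Lag^\alpha\) and pair \(\Lop^{n+1}\Lag^\alpha\) with a test polynomial \(q\). This gives \(-\kappa_n\langle\Lag^\alpha,\hat l_n\Lop q\rangle\). Here \(\Lop q\) ranges over all of \(\Pcal\), and \(\Lop\) maps the span of \(\hat l_{k+1}\) to the span of \(\hat l_k\). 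So the resulting functional vanishes on \(\hat l_m\) for \(m\ne n+1\), which forces it to be a multiple of \(\hat l_{n+1}\Lag^\alpha\). The constant is then read off from \(m=n+1\).

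I expect the only delicate point to be sign bookkeeping in the dual convention. The leading-coefficient computation is routine.
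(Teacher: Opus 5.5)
Your proof is correct and is essentially the paper's own argument. You pair \(\Lop^n\Lag^\alpha\) with the Laguerre basis via the duality convention and the lowering relation, compare with the norms \(\theta_n(\alpha)\), and specialize to \(n=1\) to get the Pearson form. The added inductive remark on the converse is valid but redundant, since the basis check already establishes both identities directly.
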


\begin{proof}
It is enough to prove \eqref{eq:L-Rodrigues-functional}, since the case \(n=1\) gives \eqref{eq:L-Pearson-Lag} because \(\hat l_1(x;\alpha)=x-\alpha-1\).

Let \(m,n\ge0\).  Using \eqref{eq:L-duality} repeatedly and \eqref{eq:Lag-L-lowering},
\[
\begin{aligned}
 \left\langle \Lop^n\Lag^\alpha,\hat l_m(x;\alpha)\right\rangle
 &=(-1)^n\left\langle\Lag^\alpha,\Lop^n\hat l_m(x;\alpha)\right\rangle                                      \\
 &=\begin{cases}
   (-1)^n\dfrac{m!}{(m-n)!}\left\langle\Lag^\alpha,\hat l_{m-n}(x;\alpha)\right\rangle,&m\ge n,\\[2mm]
   0,&m<n.
   \end{cases}
\end{aligned}
\]
Since \(\Lag^\alpha\) is normalized and annihilates \(\hat l_k(x;\alpha)\) for \(k\ge1\), the last expression is \((-1)^n n!\delta_{mn}\).  On the other hand, by \eqref{eq:Lag-norm},
\[
 \left\langle \hat l_n(x;\alpha)\Lag^\alpha,\hat l_m(x;\alpha)\right\rangle
 =n!\frac{\Gamma(n+\alpha+1)}{\Gamma(\alpha+1)}\delta_{mn}.
\]
Therefore the two sides of \eqref{eq:L-Rodrigues-functional} have the same values on the Laguerre basis.  This proves \eqref{eq:L-Rodrigues-functional}.
\end{proof}

\section{The diagonal family and the \texorpdfstring{\(\mathscr L\)}{L}-Hahn property}\label{sec:B}

We now turn to the second canonical family.  For
\[
   \beta\in\mathbb C\setminus\{0,-1,-2,\ldots\},
\]
let
\begin{equation}\label{eq:S-beta}
   \Sfun_\beta=\frac{\beta}{\beta+1}\Lag^0+\frac{1}{\beta+1}\delta.
\end{equation}
The functional \(\Sfun_\beta\) is regular, and we denote by \(\{B_n(x;\beta)\}_{n\ge0}\) its monic orthogonal polynomial sequence.  This is the diagonal family associated with \(\phi(x)=x\) and index two in the terminology of Maroni and Sfaxi.  It satisfies
\begin{equation}\label{eq:B-orthogonality}
   \langle\Sfun_\beta,B_n(x;\beta)B_m(x;\beta)\rangle=r_n(\beta)\delta_{nm},
\end{equation}
where
\begin{equation}\label{eq:B-norms}
   r_n(\beta)=(n!)^2\frac{\beta}{\beta+1}\frac{n+\beta+1}{n+\beta}.
\end{equation}
The recurrence relation is
\begin{equation}\label{eq:B-recurrence}
\begin{cases}
B_{n+1}(x;\beta)=\bigl(x-\nu_n(\beta)\bigr)B_n(x;\beta)-\lambda_n(\beta)B_{n-1}(x;\beta),\quad n\ge1,\\
B_0(x;\beta)=1,
\qquad B_1(x;\beta)=x-\nu_0(\beta),
\end{cases}
\end{equation}
with
\begin{equation}\label{eq:B-recurrence-coefficients}
   \nu_n(\beta)=2n+1-\frac{\beta}{(n+\beta)(n+\beta+1)},
   \qquad
   \lambda_n(\beta)=\frac{n^2(n+\beta-1)(n+\beta+1)}{(n+\beta)^2}.
\end{equation}
For \(\beta>0\), this is a positive definite sequence, since \(\Sfun_\beta\) is the positive measure
\[
   \frac{\beta}{\beta+1}e^{-x}\mathbf 1_{[0,\infty)}(x)\dd x
   +\frac{1}{\beta+1}\delta_0.
\]

The connection with Laguerre polynomials is the main computational device.

\begin{lemma}\label{lem:B-Laguerre-connection}
For every \(n\ge0\),
\begin{align}
   B_n(x;\beta)&=\hat l_n(x;1)+\frac{n(n+\beta+1)}{n+\beta}\hat l_{n-1}(x;1),\label{eq:B-L-connection}\\
   x\hat l_n(x;1)&=B_{n+1}(x;\beta)+\frac{(n+1)(n+\beta)}{n+\beta+1}B_n(x;\beta),\label{eq:L-B-connection}
\end{align}
where the term containing \(\hat l_{-1}\) is omitted when \(n=0\).  Consequently,
\begin{equation}\label{eq:Lag-in-B}
   \hat l_n(x;1)=\sum_{\nu=0}^n(-1)^{n-\nu}
   \frac{n!(n+\beta+1)}{\nu!(\nu+\beta+1)}B_\nu(x;\beta),
   \qquad n\ge0,
\end{equation}
and
\begin{equation}\label{eq:B-explicit}
   B_n(x;\beta)=\sum_{\nu=0}^n(-1)^{n-\nu}\binom{n}{\nu}
   \frac{n!\bigl((n+\beta+1)\nu+\beta\bigr)}{(\nu+1)!(n+\beta)}x^\nu.
\end{equation}
In particular,
\begin{equation}\label{eq:B-at-zero}
   B_n(0;\beta)=(-1)^n\frac{n!\beta}{n+\beta}.
\end{equation}
\end{lemma}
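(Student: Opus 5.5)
Define \(Q_n(x):=\hat l_n(x;1)+\kappa_n\hat l_{n-1}(x;1)\) with \(\kappa_n=\frac{n(n+\beta+1)}{n+\beta}\), and show that \(\{Q_n\}\) is the MOPS of \(\Sfun_\beta\). Since \(Q_n\) is monic of degree \(n\), it suffices to prove \(\langle\Sfun_\beta,x^kQ_n\rangle=0\) for \(0\le k<n\).

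Write \(\Sfun_\beta=\frac{\beta}{\beta+1}\Lag^0+\frac{1}{\beta+1}\delta\). The \(\Lag^0\) part is handled by the identity \(x\Lag^1=\Lag^0\) (read off from the moments \((\Lag^1)_n=(n+1)!\) and \((\Lag^0)_{n+1}=(n+1)!\)). Hence, for \(k\ge1\),
\[
\langle\Lag^0,x^kQ_n\rangle=\langle\Lag^1,x^{k-1}Q_n\rangle,
\]
which vanishes for \(k-1\le n-2\) by \(\Lag^1\)-orthogonality. The Dirac part vanishes for \(k\ge1\). So only two conditions need checking:
\begin{itemize}
\item \(k=n-1\) (when \(n\ge2\)): \(\langle \Lag^1,x^{n-2}\kappa_n\hat l_{n-1}(x;1)\rangle=0\) already holds, so every \(k\ge1\) is automatic.
\item \(k=0\): the only genuine constraint is
\[
\beta\langle\Lag^0,Q_n\rangle+Q_n(0)=0 .
\]
\end{itemize}

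To evaluate the \(k=0\) condition, use \eqref{eq:Lag-shift} at \(\alpha=0\) to get \(\langle\Lag^0,\hat l_m(x;1)\rangle=(-1)^m m!\), and use \eqref{eq:Lag-at-zero} to get \(\hat l_m(0;1)=(-1)^m(m+1)!\). The condition then becomes
\[
\beta\bigl(1-\kappa_n/n\bigr)+(n+1)-\kappa_n=0 ,
\]
a linear equation in \(\kappa_n\) with solution \(\kappa_n=\frac{n(n+\beta+1)}{n+\beta}\). This gives \eqref{eq:B-L-connection}, and regularity of \(\Sfun_\beta\) follows from this explicit construction together with the nonvanishing of the norms.

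For \eqref{eq:L-B-connection}, expand \(x\hat l_n(x;1)\) in the \(B\) basis. Its \(\Sfun_\beta\)-pairing with \(x^k\) equals \(\frac{\beta}{\beta+1}\langle\Lag^1,x^k\hat l_n(x;1)\rangle\), which is zero for \(k<n\). Hence only \(B_{n+1}\) and \(B_n\) appear. The coefficient of \(B_n\) is fixed by comparing the coefficient of \(x^n\), using the subleading coefficients of \(\hat l_n(x;1)\) (namely \(-n(n+1)\)) and of \(B_{n+1}\) (read from \eqref{eq:B-L-connection}). Alternatively, substitute \eqref{eq:B-L-connection} and \eqref{eq:Lag-x} at \(\alpha=0\) and match.

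Formula \eqref{eq:Lag-in-B} will be obtained by inverting the bidiagonal relation \eqref{eq:B-L-connection}, checked by induction on \(n\). The inductive step reduces to the telescoping identity
\[
\frac{n!(n+\beta+1)}{\nu!(\nu+\beta+1)}=\kappa_n\cdot\frac{(n-1)!(n+\beta)}{\nu!(\nu+\beta+1)},
\]
which holds exactly by the definition of \(\kappa_n\).

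For \eqref{eq:B-explicit}, substitute the explicit expansion
\[
\hat l_m(x;1)=\sum_\nu(-1)^{m-\nu}\binom{m}{\nu}\frac{m!(m+1)!}{(\nu+1)!\,m!}\,x^\nu
\]
into \eqref{eq:B-L-connection} and combine coefficients. This is a routine binomial simplification. Setting \(x=0\) in \eqref{eq:B-explicit}, or directly in \eqref{eq:B-L-connection}, gives \eqref{eq:B-at-zero}:
\[
(-1)^n\Bigl((n+1)!-\frac{n(n+\beta+1)}{n+\beta}\,n!\Bigr)=(-1)^n\frac{n!\,\beta}{n+\beta}.
\]

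The main obstacle is the \(k=0\) computation, specifically getting the Laguerre values and signs right. The coefficient algebra in \eqref{eq:B-explicit} is tedious but mechanical.
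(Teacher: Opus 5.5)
Your proof is correct. It shares the paper's key reduction but fixes the connection coefficient by a different, self-contained route.

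\textbf{What is common.} Both arguments rest on \(x\Sfun_\beta=\frac{\beta}{\beta+1}\Lag^1\). This is what collapses the expansion of \(B_n(x;\beta)\) in \(\{\hat l_\nu(x;1)\}\) to two terms, and the expansion of \(x\hat l_n(x;1)\) in \(\{B_\nu\}\) to two terms.

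\textbf{How the paper fixes the coefficients.} It reads off \(a_{n,n-1}=\frac{\beta+1}{\beta}\,r_n(\beta)/\theta_{n-1}(1)\) from the norms \(r_n(\beta)\). For the inverse relation it uses the ratio \(\frac{\beta}{\beta+1}\theta_n(1)/r_n(\beta)\). Those norms, and the regularity of \(\Sfun_\beta\), are quoted in the paper, not derived.

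\textbf{How you fix them.} You take the two-term ansatz and note that every condition with \(k\ge1\) holds automatically. You then get \(\kappa_n\) from the single zeroth-moment condition \(\beta\langle\Lag^0,Q_n\rangle+Q_n(0)=0\), using only \(\langle\Lag^0,\hat l_m(x;1)\rangle=(-1)^m m!\) and \(\hat l_m(0;1)=(-1)^m(m+1)!\). For the inverse relation you compare \(x^n\)-coefficients instead of norms.

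\textbf{What each route buys.} The paper's route is shorter once the norms are accepted. Yours recovers what the paper assumes: the one-line computation \(\langle\Sfun_\beta,x^nQ_n\rangle=\frac{\beta}{\beta+1}\kappa_n\theta_{n-1}(1)\) gives exactly \(r_n(\beta)\). Since this is nonzero for \(\beta\notin\{0,-1,-2,\ldots\}\), it yields the regularity of \(\Sfun_\beta\). You should write that computation out rather than just assert it. Your uniqueness step (``it suffices to check \(k<n\)'') and your two-term expansion of \(x\hat l_n(x;1)\) both depend on that regularity.

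\textbf{One slip.} The identity you need is \(x\Lag^0=\Lag^1\), not \(x\Lag^1=\Lag^0\); the latter is false, since \(x\Lag^1\) has moments \((n+2)!\). Your moment check and your formula \(\langle\Lag^0,x^kQ_n\rangle=\langle\Lag^1,x^{k-1}Q_n\rangle\) both use the correct direction, so nothing downstream is affected.
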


\begin{proof}
Since \(x\Sfun_\beta=\frac{\beta}{\beta+1}x\Lag^0=\frac{\beta}{\beta+1}\Lag^1\), we may expand
\[
   B_n(x;\beta)=\hat l_n(x;1)+\sum_{\nu=0}^{n-1}a_{n\nu}\hat l_\nu(x;1).
\]
For \(0\le\nu\le n-1\),
\[
 a_{n\nu}=\frac{\langle\Lag^1,B_n(x;\beta)\hat l_\nu(x;1)\rangle}{\theta_\nu(1)}
 =\frac{\beta+1}{\beta}\frac{\langle\Sfun_\beta,B_n(x;\beta)x\hat l_\nu(x;1)\rangle}{\theta_\nu(1)}.
\]
The polynomial \(x\hat l_\nu(x;1)\) has degree \(\nu+1\); hence orthogonality with respect to \(\Sfun_\beta\) gives \(a_{n\nu}=0\) for \(\nu\le n-2\).  For \(\nu=n-1\), \eqref{eq:B-norms} and \eqref{eq:Lag-norm} yield
\[
   a_{n,n-1}=\frac{\beta+1}{\beta}\frac{r_n(\beta)}{\theta_{n-1}(1)}
   =\frac{n(n+\beta+1)}{n+\beta}.
\]
This proves \eqref{eq:B-L-connection}.  The inverse connection \eqref{eq:L-B-connection} is obtained in the same way by expanding \(x\hat l_n(x;1)\) in the basis \(\{B_\nu(x;\beta)\}\).  Orthogonality leaves only the terms \(B_{n+1}\) and \(B_n\), and the coefficient of \(B_n\) is
\[
   \frac{\langle\Sfun_\beta,B_n(x;\beta)x\hat l_n(x;1)\rangle}{r_n(\beta)}
   =\frac{\beta}{\beta+1}\frac{\theta_n(1)}{r_n(\beta)}
   =\frac{(n+1)(n+\beta)}{n+\beta+1}.
\]
Formula \eqref{eq:Lag-in-B} follows by induction from \eqref{eq:B-L-connection}.  Formula \eqref{eq:B-explicit} follows by substituting the Taylor expansion of \(\hat l_n(x;1)\) into \eqref{eq:B-L-connection}; setting \(x=0\) gives \eqref{eq:B-at-zero}.
\end{proof}

We next give an intrinsic characterization of the family \(B_n(x;\beta)\).  It is formulated so that the role of the resolvent \((I-D)^{-1}\) is visible.

\begin{theorem}\label{thm:B-characterization}
Let \(\beta\notin\{0,-1,-2,\ldots\}\), and let \(\{Q_n\}_{n\ge0}\) be a MOPS with normalized regular functional \(u_0\).  The following assertions are equivalent.
\begin{enumerate}
\item \(Q_n(x)=B_n(x;\beta)\) for every \(n\ge0\).
\item There exists a monic polynomial sequence \(\{\Pi_n\}_{n\ge0}\) such that, for every \(n\ge0\),
\begin{align}
   \Pi_{n+1}(x)-D\Pi_{n+1}(x)&=Q_{n+1}(x),\label{eq:Pi-characterization-a}\\
   Q_{n+1}(0)&=-\beta\Pi_{n+1}(0).\label{eq:Pi-characterization-b}
\end{align}
\end{enumerate}
\end{theorem}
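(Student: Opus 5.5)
The plan is to turn the two conditions in (2) into a single statement about the functional \(\Sfun_\beta\). The key idea is that \eqref{eq:Pi-characterization-a} determines \(\Pi_{n+1}\) completely, through the resolvent, and that \eqref{eq:Pi-characterization-b} then says exactly that \(\Sfun_\beta\) annihilates \(Q_{n+1}\). Lemma~\ref{lem:Ldelta} is the bridge between the two.

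\emph{Solving for \(\Pi_{n+1}\).} Since \((I-D)^{-1}=I+\Lop\) is a well-defined inverse of \(I-D\) on \(\Pcal\), relation \eqref{eq:Pi-characterization-a} holds if and only if
\[
\Pi_{n+1}=(I+\Lop)Q_{n+1}.
\]
This polynomial is monic of degree \(n+1\), because \(\Lop\) lowers degree. So for any MPS \(\{Q_n\}\) there is exactly one admissible choice of \(\Pi_{n+1}\), \(n\ge0\), and we put \(\Pi_0=1\).

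\emph{Rewriting \eqref{eq:Pi-characterization-b}.} The condition becomes
\[
Q_{n+1}(0)=-\beta\bigl(Q_{n+1}(0)+\Lop Q_{n+1}(0)\bigr).
\]
By \eqref{eq:L0-delta} and the duality convention \eqref{eq:L-duality},
\[
\langle\Lag^0,p\rangle=p(0)-\langle\Lop\delta,p\rangle=p(0)+\Lop p(0)
\]
for every \(p\in\Pcal\). This is also the case \(t=0\) of \eqref{eq:L-integral-representation}. Hence \eqref{eq:Pi-characterization-b} reads
\[
\beta\langle\Lag^0,Q_{n+1}\rangle+\langle\delta,Q_{n+1}\rangle=0 .
\]
Dividing by \(\beta+1\), which is nonzero because \(\beta\ne-1\), this is equivalent to
\[
\langle\Sfun_\beta,Q_{n+1}\rangle=0,\qquad n\ge0.
\]
So (2) is equivalent to the statement that \(\Sfun_\beta\) annihilates \(Q_m\) for every \(m\ge1\).

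\emph{(2) implies (1).} The functional \(\Sfun_\beta\) is normalized, and so is \(u_0\). Orthogonality of \(\{Q_n\}\) with respect to \(u_0\) gives \(\langle u_0,Q_m\rangle=\langle u_0,Q_mQ_0\rangle=0\) for \(m\ge1\). Thus \(u_0\) and \(\Sfun_\beta\) take the same values on the basis \(\{Q_m\}_{m\ge0}\), so \(u_0=\Sfun_\beta\). Uniqueness of the MOPS attached to a regular functional then yields \(Q_n=B_n(x;\beta)\).

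\emph{(1) implies (2).} Conversely, if \(Q_n=B_n(x;\beta)\), orthogonality \eqref{eq:B-orthogonality} with \(m=0\) gives \(\langle\Sfun_\beta,B_{n+1}\rangle=0\). Taking \(\Pi_{n+1}=(I+\Lop)B_{n+1}\) then satisfies both conditions by the equivalences above.

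I expect no real obstacle here; the only delicate point is tracking the sign convention in \(\langle\Lop\delta,p\rangle=-\Lop p(0)\), since an error there would replace \(\Sfun_\beta\) by the wrong combination of \(\Lag^0\) and \(\delta\). As a cross-check, I would verify the case \(n=0\) numerically using \eqref{eq:B-at-zero}.
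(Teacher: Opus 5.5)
Your proof is correct, and it is organized differently from the paper's. For (1)$\Rightarrow$(2), the paper exhibits $\Pi_{n+1}$ explicitly in the Laguerre basis of parameter zero, namely $\Pi_{n+1}=\hat l_{n+1}(x;0)+\frac{(n+1)(n+\beta+2)}{n+\beta+1}\hat l_n(x;0)$. It then checks \eqref{eq:Pi-characterization-b} against the explicit values $B_{n+1}(0;\beta)$ from Lemma~\ref{lem:B-Laguerre-connection}. For (2)$\Rightarrow$(1), it writes $\Pi_{n+1}=xA_n+\Pi_{n+1}(0)$ and uses the Pearson equation $(x\Lag^0)'+(x-1)\Lag^0=0$ to obtain $\langle\Lag^0+\beta^{-1}\delta,Q_{n+1}\rangle=0$.

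You instead note that \eqref{eq:Pi-characterization-a} forces $\Pi_{n+1}=(I+\Lop)Q_{n+1}$, and that $\Pi_{n+1}(0)=\langle\Lag^0,Q_{n+1}\rangle$ by Lemma~\ref{lem:Ldelta}. So (2) is exactly the statement $\langle\Sfun_\beta,Q_m\rangle=0$ for $m\ge1$, which gives both directions at once. The key fact is the same in both arguments: the paper's Pearson step is the dual form $\langle\Lag^0,(I-D)q\rangle=q(0)$ of $\Lag^0=\delta-\Lop\delta$, applied to $q=xA_n$.

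Your packaging is shorter and more self-contained. It needs neither the connection formula nor the norms $r_n(\beta)$, which the paper quotes rather than derives. It also shows that the equivalence of (2) with annihilation by $\Sfun_\beta$ holds for any monic sequence; orthogonality enters only to identify $u_0$.

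What the paper's route buys is the closed form \eqref{eq:Pi-def} of $(I-D)^{-1}B_{n+1}$. That formula is differentiated in the proof of Theorem~\ref{thm:B-L-Hahn}, so with your approach it would still have to be derived separately.
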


\begin{proof}
Assume first that \(Q_n=B_n(\cdot;\beta)\).  Define
\begin{equation}\label{eq:Pi-def}
   \Pi_{n+1}(x;\beta)=\hat l_{n+1}(x;0)
      +\frac{(n+1)(n+\beta+2)}{n+\beta+1}\hat l_n(x;0).
\end{equation}
Using \eqref{eq:Lag-shift} and \eqref{eq:B-L-connection}, one checks directly that
\[
   \Pi_{n+1}(x;\beta)-D\Pi_{n+1}(x;\beta)=B_{n+1}(x;\beta).
\]
Moreover, by \eqref{eq:Lag-at-zero},
\[
   \Pi_{n+1}(0;\beta)=-(-1)^{n+1}\frac{(n+1)!}{n+\beta+1},
\]
while \eqref{eq:B-at-zero} gives
\[
   B_{n+1}(0;\beta)=(-1)^{n+1}\frac{(n+1)!\beta}{n+\beta+1}.
\]
Thus \eqref{eq:Pi-characterization-b} holds.

Conversely, suppose that \eqref{eq:Pi-characterization-a}--\eqref{eq:Pi-characterization-b} hold.  By Euclidean division there is a monic polynomial \(A_n\) of degree \(n\) such that
\[
   \Pi_{n+1}(x)=xA_n(x)+\Pi_{n+1}(0)
   =xA_n(x)-\beta^{-1}Q_{n+1}(0).
\]
Substitution in \eqref{eq:Pi-characterization-a} gives the identity
\begin{equation}\label{eq:key-Pi-proof}
   Q_{n+1}(x)+\beta^{-1}Q_{n+1}(0)=(x-1)A_n(x)-xDA_n(x).
\end{equation}
Pairing \eqref{eq:key-Pi-proof} with \(\Lag^0\) and using the Pearson equation
\((x\Lag^0)' +(x-1)\Lag^0=0\), we obtain
\[
\begin{aligned}
\langle\Lag^0,Q_{n+1}\rangle+\beta^{-1}Q_{n+1}(0)
 &=\langle (x-1)\Lag^0,A_n\rangle-\langle x\Lag^0,DA_n\rangle \\
 &=\langle (x-1)\Lag^0,A_n\rangle+\langle (x\Lag^0)',A_n\rangle=0.
\end{aligned}
\]
Hence
\[
   \left\langle \Lag^0+\beta^{-1}\delta,Q_{n+1}\right\rangle=0,
   \qquad n\ge0.
\]
After normalization, the functional \(\Lag^0+\beta^{-1}\delta\) becomes \(\Sfun_\beta\).  Since a MOPS has a unique normalized orthogonality functional, \(u_0=\Sfun_\beta\), and therefore \(Q_n=B_n(x;\beta)\) for every \(n\).
\end{proof}

We can now state the main lowering result for the diagonal family.

\begin{theorem}\label{thm:B-L-Hahn}
For every \(\beta\notin\{0,-1,-2,\ldots\}\),
\begin{equation}\label{eq:B-L-lowering}
   \Lop B_{n+1}(x;\beta)=(n+1)B_n(x;\beta+1),
   \qquad n\ge0.
\end{equation}
Consequently, \(\{B_n(x;\beta)\}_{n\ge0}\) is \(\Lop\)-Hahn admissible.
Moreover,
\begin{equation}\label{eq:B-zero-shift}
   B_{n+1}(0;\beta)=-\frac{\beta}{\beta+1}(n+1)B_n(0;\beta+1),
   \qquad n\ge0.
\end{equation}
\end{theorem}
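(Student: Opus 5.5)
Our approach is to work entirely in the Laguerre basis of parameter one, using the connection formula \eqref{eq:B-L-connection} of Lemma~\ref{lem:B-Laguerre-connection} together with the \(\Lop\)-Appell property \eqref{eq:Lag-L-lowering} of Proposition~\ref{prop:Lag-lowering} with \(\alpha=1\). First we write
\[
   B_{n+1}(x;\beta)=\hat l_{n+1}(x;1)+\frac{(n+1)(n+\beta+2)}{n+\beta+1}\hat l_n(x;1).
\]
Then we apply \(\Lop\) term by term. Since \(\Lop\hat l_{k+1}(x;1)=(k+1)\hat l_k(x;1)\), this gives
\[
   \Lop B_{n+1}(x;\beta)=(n+1)\hat l_n(x;1)+\frac{(n+1)n(n+\beta+2)}{n+\beta+1}\hat l_{n-1}(x;1)
   =(n+1)\Bigl[\hat l_n(x;1)+\frac{n\bigl(n+(\beta+1)+1\bigr)}{n+(\beta+1)}\hat l_{n-1}(x;1)\Bigr].
\]
By \eqref{eq:B-L-connection} with \(\beta\) replaced by \(\beta+1\), which is admissible because \(\beta+1\notin\{0,-1,-2,\ldots\}\), the bracket is exactly \(B_n(x;\beta+1)\). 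This proves \eqref{eq:B-L-lowering}. For \(n=0\) the \(\hat l_{-1}\) term is absent and both sides equal \(1\).

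The \(\Lop\)-Hahn admissibility then follows at once. By \eqref{eq:B-L-lowering}, \(P_n^{\langle1\rangle}=B_n(\cdot;\beta+1)\). This is the MOPS of the regular functional \(\Sfun_{\beta+1}\), whose regularity was recorded for every admissible parameter at the start of this section.

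For \eqref{eq:B-zero-shift} we evaluate \eqref{eq:B-at-zero} directly. On the left, \(B_{n+1}(0;\beta)=(-1)^{n+1}(n+1)!\beta/(n+\beta+1)\). On the right,
\[
   -\frac{\beta}{\beta+1}(n+1)(-1)^n\frac{n!(\beta+1)}{n+\beta+1}=(-1)^{n+1}\frac{(n+1)!\beta}{n+\beta+1},
\]
so the two agree.

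There is no real obstacle. The only step needing care is recognizing that the coefficient \(n(n+\beta+2)/(n+\beta+1)\) is precisely the connection coefficient for the shifted parameter \(\beta+1\). The rest is bookkeeping of the index shift in \eqref{eq:Lag-L-lowering}.
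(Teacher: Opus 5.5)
Your proof is correct. It reaches \eqref{eq:B-L-lowering} through a slightly different packaging than the paper's.

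The paper passes through the auxiliary polynomial $\Pi_{n+1}(x;\beta)=\hat l_{n+1}(x;0)+\frac{(n+1)(n+\beta+2)}{n+\beta+1}\hat l_n(x;0)$ from Theorem~\ref{thm:B-characterization}. Since $(I-D)\Pi_{n+1}=B_{n+1}(x;\beta)$, it writes $\Lop B_{n+1}=D(I-D)^{-1}B_{n+1}=D\Pi_{n+1}$. It then differentiates with \eqref{eq:Lag-D} at $\alpha=0$, lands on the same bracket you obtain, and closes with \eqref{eq:B-L-connection} at $\beta+1$.

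You instead stay in the basis $\{\hat l_k(x;1)\}$ and apply $\Lop$ termwise via \eqref{eq:Lag-L-lowering} with $\alpha=1$. The two computations agree underneath. By \eqref{eq:Lag-shift} and \eqref{eq:Lag-D} at $\alpha=0$ one has $(I-D)^{-1}\hat l_k(x;1)=\hat l_k(x;0)$. Hence $\Pi_{n+1}$ is just $(I-D)^{-1}B_{n+1}$, written with the same coefficients in the parameter-zero basis.

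Each route has its advantage:
\begin{itemize}
\item Your route is shorter and uses only results already established: the connection lemma and Proposition~\ref{prop:Lag-lowering}. It avoids the identity $(I-D)\Pi_{n+1}=B_{n+1}$, which the paper only says can be checked directly.
\item The paper's route makes the resolvent step explicit. It ties the lowering theorem to the characterization in Theorem~\ref{thm:B-characterization}, where $\Pi_{n+1}$ is exactly the $(I-D)^{-1}$-preimage of $B_{n+1}$ whose derivative produces the shifted family.
\end{itemize}

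Your treatment of the Hahn admissibility and of \eqref{eq:B-zero-shift} matches the paper's.
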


\begin{proof}
Differentiate \eqref{eq:Pi-def}.  By \eqref{eq:Lag-D},
\[
   \frac{1}{n+1}D\Pi_{n+1}(x;\beta)
   =\hat l_n(x;1)+\frac{n(n+\beta+2)}{n+\beta+1}\hat l_{n-1}(x;1).
\]
By \eqref{eq:B-L-connection}, with \(\beta\) replaced by \(\beta+1\), the right-hand side is \(B_n(x;\beta+1)\).  Since \((I-D)\Pi_{n+1}=B_{n+1}(x;\beta)\), we have \((I-D)^{-1}B_{n+1}=\Pi_{n+1}\), and therefore
\[
   \Lop B_{n+1}=D(I-D)^{-1}B_{n+1}=D\Pi_{n+1}=(n+1)B_n(x;\beta+1).
\]
This proves \eqref{eq:B-L-lowering}.  The sequence \(\{B_n(x;\beta+1)\}\) is orthogonal with respect to \(\Sfun_{\beta+1}\), hence the \(\Lop\)-derived sequence is a MOPS.  Finally \eqref{eq:B-zero-shift} is a direct consequence of \eqref{eq:B-at-zero}.
\end{proof}

\begin{remark}
The content of Theorem~\ref{thm:B-L-Hahn} is genuinely Hahn-type rather than
Appell: the lowering operation preserves orthogonality while moving the
parameter from \(\beta\) to \(\beta+1\). Thus the derived family remains
inside the same diagonal class without being required to coincide with the
original sequence.
\end{remark}

The next result gives a compact characterization of the two basic functionals
\(\Lag^0\) and \(\Sfun_\beta\) by a single boundary condition.

\begin{theorem}\label{thm:point-classification}
Let \(\{P_n\}_{n\ge0}\) be a MOPS with normalized regular functional \(u_0\).  Let
\[
   \theta\in\mathbb C\setminus\left\{0,-\frac{n}{n-1}:n\ge2\right\}.
\]
The following assertions are equivalent.
\begin{enumerate}
\item There exists \(c\in\mathbb C\) such that
\begin{equation}\label{eq:point-condition}
   P_{n+1}(c)=\theta\Lop P_{n+1}(c),
   \qquad n\ge0.
\end{equation}
\item The functional \(u_0\) is given by
\begin{equation}\label{eq:point-class-functional}
   u_0=
   \begin{cases}
      \tau_c(\Lag^0),& \theta=-1,\\[1mm]
      \tau_c\left(\Sfun_{-\theta/(\theta+1)}\right),& \theta\ne-1.
   \end{cases}
\end{equation}
\end{enumerate}
\end{theorem}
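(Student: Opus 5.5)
The plan is to turn the pointwise condition \eqref{eq:point-condition} into a statement about a single linear functional, and then read off that functional with Lemma~\ref{lem:Ldelta}. Write \(\delta_c=\tau_c\delta\); indeed \(\langle\tau_c\delta,q\rangle=q(c)\) with the paper's convention. By the duality \eqref{eq:L-duality},
\[
   P_{n+1}(c)-\theta\,\Lop P_{n+1}(c)
   =\langle\delta_c,P_{n+1}\rangle-\theta\langle\delta_c,\Lop P_{n+1}\rangle
   =\langle\delta_c+\theta\Lop\delta_c,P_{n+1}\rangle .
\]
Hence (1) says exactly that the functional \(w:=\delta_c+\theta\Lop\delta_c\) annihilates \(P_{n+1}\) for every \(n\ge0\). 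Moreover \(\langle w,1\rangle=1\), because \(\Lop1=0\).

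For the direction (1)\(\Rightarrow\)(2), use the dual sequence. Since \(\{P_n\}\) is a basis and \(w\) vanishes on every \(P_m\) with \(m\ge1\), we have \(w=\langle w,P_0\rangle u_0=u_0\) by \eqref{eq:dual-orthogonal}. Thus \(u_0=\delta_c+\theta\Lop\delta_c\).

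Next check that \(\Lop\) commutes with \(\tau_c\) on \(\Pcal'\). This follows by duality from the commutation of \(\Lop\) with translations on \(\Pcal\): \(\langle\Lop\tau_cu,q\rangle=-\langle u,(\Lop q)(x+c)\rangle=-\langle u,\Lop(q(x+c))\rangle=\langle\tau_c\Lop u,q\rangle\). Therefore \(u_0=\tau_c(\delta+\theta\Lop\delta)\). Lemma~\ref{lem:Ldelta} gives \(\Lop\delta=\delta-\Lag^0\), so
\[
   u_0=\tau_c\bigl((1+\theta)\delta-\theta\Lag^0\bigr).
\]
If \(\theta=-1\), this is \(\tau_c\Lag^0\). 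If \(\theta\ne-1\), put \(\beta=-\theta/(\theta+1)\). Then \(\beta/(\beta+1)=-\theta\) and \(1/(\beta+1)=\theta+1\), so the bracket equals \(\Sfun_\beta\) by \eqref{eq:S-beta}. This is \eqref{eq:point-class-functional}.

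The only point needing care is that the exclusions on \(\theta\) are exactly the admissibility conditions \(\beta\notin\{0,-1,-2,\ldots\}\) of \eqref{eq:S-beta}. We have \(\beta=0\) iff \(\theta=0\). The equation \(\beta=-1\) has no solution, since it would force \(\theta=\theta+1\). For \(m\ge2\), \(\beta=-m\) iff \(\theta=m(\theta+1)\), i.e.\ \(\theta=-m/(m-1)\). Hence for the allowed \(\theta\) the functional is the regular \(\Sfun_\beta\) and the statement is consistent.

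For the converse (2)\(\Rightarrow\)(1), the same identities run backwards. If \(u_0\) has the form \eqref{eq:point-class-functional}, then \(u_0=\tau_c((1+\theta)\delta-\theta\Lag^0)=\delta_c+\theta\Lop\delta_c\). Since \(u_0\) is the orthogonality functional of \(\{P_n\}\), it annihilates \(P_{n+1}\) for \(n\ge0\), and the first display gives \eqref{eq:point-condition}.

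I expect no real obstacle. The steps most likely to hide an error are the sign conventions: the duality sign in \eqref{eq:L-duality}, the direction of \(\tau_c\), and the commutation of \(\Lop\) with \(\tau_c\) on functionals. I will verify each of these explicitly on monomials before assembling the argument.
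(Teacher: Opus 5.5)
Your proposal is correct and follows the paper's proof: you rewrite the pointwise condition as \(\langle\delta_c+\theta\Lop\delta_c,P_{n+1}\rangle=0\), identify \(u_0=\tau_c(\delta+\theta\Lop\delta)\) by uniqueness of the normalized functional annihilating \(P_n\) for \(n\ge1\), and conclude with \(\Lop\delta=\delta-\Lag^0\) and the parameter match \(\beta=-\theta/(\theta+1)\). Your explicit check that \(\Lop\) commutes with \(\tau_c\) on \(\Pcal'\) fills in a step the paper uses implicitly. Your converse reads the functional identity backwards, which is the paper's ``reversing the argument''; the paper also points to the explicit evaluations at the origin as an alternative check.
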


\begin{proof}
Condition \eqref{eq:point-condition} is equivalent, by \eqref{eq:L-duality}, to
\[
   \left\langle\delta_c+\theta\Lop\delta_c,P_{n+1}\right\rangle=0,
   \qquad n\ge0.
\]
The functional \(\delta_c+\theta\Lop\delta_c\) is normalized, since \(\Lop1=0\).  By uniqueness of the normalized functional annihilating \(P_n\), \(n\ge1\), we obtain
\[
   u_0=\delta_c+\theta\Lop\delta_c=\tau_c(\delta+\theta\Lop\delta).
\]
Using \eqref{eq:L0-delta}, namely \(\Lop\delta=\delta-\Lag^0\),
\[
   \delta+\theta\Lop\delta=(1+\theta)\delta-\theta\Lag^0.
\]
If \(\theta=-1\), this is \(\Lag^0\).  If \(\theta\ne-1\), setting \(\beta=-\theta/(\theta+1)\) gives precisely \(\Sfun_\beta\).  The excluded values of \(\theta\) are exactly those for which \(\beta\in\{0,-2,-3,\ldots\}\); the remaining singular value \(\beta=-1\) is not attained by finite \(\theta\).  The converse is obtained by reversing the argument, using \eqref{eq:Lag-L-lowering} for \(\Lag^0\) and \eqref{eq:B-zero-shift} for \(\Sfun_\beta\).
\end{proof}

\begin{remark}
Theorem~\ref{thm:point-classification} is a rigidity statement. No Appell
assumption and no a priori Laguerre or diagonal representation is imposed:
the scalar relation \eqref{eq:point-condition}, required at one point for all
degrees, determines the normalized orthogonality functional completely.
\end{remark}

The lowering theorem also produces useful finite structure relations. We
state them with the convention \(B_{-1}(x;\beta)=0\).

\begin{proposition}\label{prop:B-structure-relations}
Let
\[
   B_n^{\langle1\rangle}(x;\beta)=\frac{1}{n+1}\Lop B_{n+1}(x;\beta).
\]
Then \(B_n^{\langle1\rangle}(x;\beta)=B_n(x;\beta+1)\) and, for \(n\ge0\),
\begin{align}
   xB_n(x;\beta)&=B_{n+1}^{\langle1\rangle}(x;\beta)+a_nB_n^{\langle1\rangle}(x;\beta)
      +b_nB_{n-1}^{\langle1\rangle}(x;\beta),\label{eq:structure-1}\\
   xB_n^{\langle1\rangle}(x;\beta)&=B_{n+1}(x;\beta)+c_nB_n(x;\beta)+d_nB_{n-1}(x;\beta),\label{eq:structure-2}\\
   B_n(x;\beta)+t_nB_{n-1}(x;\beta)&=B_n^{\langle1\rangle}(x;\beta)+s_nB_{n-1}^{\langle1\rangle}(x;\beta),\label{eq:structure-3}
\end{align}
where
\begin{align*}
   a_n&=(n+\beta+1)\left(\frac{n+1}{n+\beta+2}+\frac{n}{n+\beta}\right),
   & b_n&=n^2,\\
   c_n&=\frac{(n+1)(n+\beta)+n(n+\beta+2)}{n+\beta+1},
   & d_n&=\frac{n^2(n+\beta+2)(n+\beta-1)}{(n+\beta)(n+\beta+1)},\\
   t_n&=\frac{n(n+\beta-1)}{n+\beta},
   & s_n&=\frac{n(n+\beta)}{n+\beta+1}.
\end{align*}
\end{proposition}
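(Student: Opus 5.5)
The plan is to work entirely in the basis \(\{\hat l_n(x;1)\}\). Both \(B_n(x;\beta)\) and \(B_n(x;\beta+1)\) have two-term expansions there by \eqref{eq:B-L-connection}, and \(x\hat l_n(x;1)\) has two-term expansions in either \(B\)-family by \eqref{eq:L-B-connection}. The identity \(B_n^{\langle1\rangle}(x;\beta)=B_n(x;\beta+1)\) is exactly Theorem~\ref{thm:B-L-Hahn}, so every \(B^{\langle1\rangle}\) below may be replaced by the \(\beta+1\) family. Write
\[
A_n=\frac{n(n+\beta+1)}{n+\beta},\qquad A_n'=\frac{n(n+\beta+2)}{n+\beta+1}
\]
for the connection coefficients of \eqref{eq:B-L-connection} at \(\beta\) and \(\beta+1\).

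For \eqref{eq:structure-1}, multiply \(B_n(x;\beta)=\hat l_n(x;1)+A_n\hat l_{n-1}(x;1)\) by \(x\). Then apply \eqref{eq:L-B-connection} with \(\beta\) replaced by \(\beta+1\) to both \(x\hat l_n(x;1)\) and \(x\hat l_{n-1}(x;1)\). The expansion terminates after \(B_{n-1}(x;\beta+1)\) and gives
\[
a_n=\frac{(n+1)(n+\beta+1)}{n+\beta+2}+A_n,\qquad b_n=A_n\frac{n(n+\beta)}{n+\beta+1}=n^2,
\]
which match the stated values. For \eqref{eq:structure-2}, do the same with the roles exchanged. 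Start from \(B_n(x;\beta+1)=\hat l_n(x;1)+A_n'\hat l_{n-1}(x;1)\), multiply by \(x\), and use \eqref{eq:L-B-connection} at parameter \(\beta\). This yields
\[
c_n=\frac{(n+1)(n+\beta)}{n+\beta+1}+A_n',\qquad d_n=A_n'\frac{n(n+\beta-1)}{n+\beta},
\]
again the stated formulas.

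For \eqref{eq:structure-3}, expand both sides in \(\hat l_n(x;1),\hat l_{n-1}(x;1),\hat l_{n-2}(x;1)\) and compare coefficients. The \(\hat l_n\) coefficients agree automatically because both sides are monic. The \(\hat l_{n-2}\) coefficients require \(t_nA_{n-1}=s_nA'_{n-1}\); both products equal \(n(n-1)\), as a one-line computation shows. The \(\hat l_{n-1}\) coefficients require \(A_n+t_n=A_n'+s_n\). This is the only genuinely rational-function identity to check. After clearing denominators, both sides reduce to \(n(n+\beta+1)(\cdots)/((n+\beta)(n+\beta+1))\). I would verify it by direct simplification, for instance by writing \(A_n-t_n\) and \(A_n'-s_n\) separately: each equals \(2n/(n+\beta)\) resp. \(2n/(n+\beta+1)\) up to the appropriate shift, after which the difference is immediate.

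The main obstacle is purely bookkeeping, namely the \(\hat l_{n-1}\) coefficient check in \eqref{eq:structure-3}, together with the small-\(n\) edge cases. For \(n=0\) and \(n=1\), the terms involving \(\hat l_{-1}\) and \(B_{-1}\) vanish. The formulas remain consistent because \(b_0=d_0=t_0=s_0=0\) and \(A_0=A_0'=0\), so the same computation covers these cases with the convention \(B_{-1}=0\).
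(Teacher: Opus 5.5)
Your proposal is correct and follows the paper's proof: you multiply \eqref{eq:B-L-connection} by \(x\) and re-expand with \eqref{eq:L-B-connection} at the other parameter to get \eqref{eq:structure-1}--\eqref{eq:structure-2}, and you compare coefficients in the basis \(\{\hat l_j(x;1)\}\) to get \eqref{eq:structure-3}. Your values of \(a_n,b_n,c_n,d_n\) and the \(\hat l_{n-2}\) check \(t_nA_{n-1}=s_nA'_{n-1}=n(n-1)\) are right, but your sketch of the \(\hat l_{n-1}\) check is muddled. The differences \(A_n-t_n=2n/(n+\beta)\) and \(A_n'-s_n=2n/(n+\beta+1)\) are not equal and do not settle the question by themselves; you should instead compute the sums you actually need, \(A_n+t_n=\frac{n(2n+2\beta)}{n+\beta}=2n=\frac{n(2n+2\beta+2)}{n+\beta+1}=A_n'+s_n\).
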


\begin{proof}
The identity \(B_n^{\langle1\rangle}(x;\beta)=B_n(x;\beta+1)\) is \eqref{eq:B-L-lowering}.  Formula \eqref{eq:structure-1} follows by multiplying \eqref{eq:B-L-connection} by \(x\) and using \eqref{eq:L-B-connection} with \(\beta\) replaced by \(\beta+1\).  Formula \eqref{eq:structure-2} follows similarly by writing \(B_n^{\langle1\rangle}(x;\beta)=B_n(x;\beta+1)\), using \eqref{eq:B-L-connection} with parameter \(\beta+1\), and then using \eqref{eq:L-B-connection} with parameter \(\beta\).  Finally, \eqref{eq:structure-3} is obtained by expressing both sides in the Laguerre basis \(\{\hat l_j(x;1)\}\).  The coefficients of \(\hat l_n\), \(\hat l_{n-1}\) and \(\hat l_{n-2}\) coincide, respectively, and no other terms occur.
\end{proof}

The functional \(\Sfun_\beta\) is semiclassical of class one in the ordinary differential sense.  This can be read directly from the following characterization.

\begin{proposition}\label{prop:B-differential-characterization}
Let \(\{P_n\}_{n\ge0}\) be a MOPS with normalized regular functional \(u_0\), and assume \((u_0)_1\ne1\).  The following statements are equivalent.
\begin{enumerate}
\item There exist complex coefficients \(\lambda_{n+1,\nu}\), \(\nu=n-1,n,n+1\), with \(\lambda_{n+1,n+1}=1\) and \(\lambda_{2,0}=0\), such that
\begin{equation}\label{eq:differential-structure-general}
   x^2P_n''(x)-x(x-2)P_n'(x)
   =-n\sum_{\nu=n-1}^{n+1}\lambda_{n+1,\nu}P_\nu(x),
   \qquad n\ge1.
\end{equation}
\item The functional \(u_0\) satisfies
\begin{equation}\label{eq:Sbeta-Pearson-D}
   (x^2u_0)' +x(x-2)u_0=0.
\end{equation}
\item One has
\begin{equation}\label{eq:u0-Sbeta-moment}
   u_0=\Sfun_\beta,
   \qquad
   \beta=\frac{(u_0)_1}{1-(u_0)_1}.
\end{equation}
\end{enumerate}
For \(P_n=B_n(x;\beta)\), the coefficients in \eqref{eq:differential-structure-general} are
\begin{align}\label{eq:B-differential-equation}
 x^2B_n''(x;\beta)-x(x-2)B_n'(x;\beta)
 &=-n\sum_{\nu=n-1}^{n+1}\lambda_{n+1,\nu}(\beta)B_\nu(x;\beta),
 \qquad n\ge1,
\end{align}
with
\begin{align*}
\lambda_{n+1,n+1}(\beta)&=1,\\
\lambda_{n+1,n}(\beta)&=\frac{(n+1)(n+\beta)}{n+\beta+1}
      +\frac{(n-1)(n+\beta+1)}{n+\beta},\\
\lambda_{n+1,n-1}(\beta)&=\frac{n(n-1)(n+\beta-1)(n+\beta+1)}{(n+\beta)^2}.
\end{align*}
\end{proposition}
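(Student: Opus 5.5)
I will prove the equivalences in the order (ii)$\Leftrightarrow$(iii), then (ii)$\Rightarrow$(i), then (i)$\Rightarrow$(ii), and finally compute the explicit coefficients for \(B_n(x;\beta)\).

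\emph{(iii)$\Rightarrow$(ii).} Verify the Pearson equation for \(\Sfun_\beta\) directly. Write \(\Sfun_\beta=\frac{\beta}{\beta+1}\Lag^0+\frac{1}{\beta+1}\delta\). Since \(x^2\delta=0\) and \(x\delta=0\), only the Laguerre part contributes. Now \(x^2\Lag^0=x\Lag^1\), and \eqref{eq:Lag-Pearson-D} with \(\alpha=0\), multiplied by \(x\), gives
\[
x(x\Lag^0)'+x(x-1)\Lag^0=0 .
\]
Using \(x(x\Lag^0)'=(x^2\Lag^0)'-x\Lag^0\), this becomes \((x^2\Lag^0)'+x(x-2)\Lag^0=0\), which is \eqref{eq:Sbeta-Pearson-D}.

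\emph{(ii)$\Rightarrow$(iii).} Test \eqref{eq:Sbeta-Pearson-D} on \(x^n\) to get the moment recurrence
\[
-n(u_0)_{n+1}+(u_0)_{n+2}-2(u_0)_{n+1}=0,
\qquad\text{that is}\qquad (u_0)_{n+2}=(n+2)(u_0)_{n+1},\quad n\ge0 .
\]
Hence \((u_0)_{m}=m!\,(u_0)_1\) for \(m\ge1\) and \((u_0)_0=1\). This is exactly the moment sequence of \((u_0)_1\Lag^0+(1-(u_0)_1)\delta\). Comparing with \(\Sfun_\beta\) gives \(\beta/(\beta+1)=(u_0)_1\), i.e.\ \(\beta=(u_0)_1/(1-(u_0)_1)\). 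The assumption \((u_0)_1\neq1\) is used here, and regularity of \(u_0\) forces \(\beta\notin\{0,-1,-2,\dots\}\).

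\emph{(ii)$\Rightarrow$(i).} Use the standard semiclassical argument with \(\Phi=x^2\) and \(\Psi=-x(x-2)\). Expand \(x^2P_n''-x(x-2)P_n'\), a polynomial of degree \(n+1\) with leading coefficient \(-n\), in the basis \(\{P_\nu\}\). The coefficient of \(P_m\) is proportional to \(\langle u_0,P_m(x^2P_n''-x(x-2)P_n')\rangle\). Integrating by parts and using \eqref{eq:Sbeta-Pearson-D}, this becomes
\[
-\langle x^2u_0,P_m'P_n'\rangle ,
\]
which is symmetric in \(m,n\). The original expression has degree \(\le m+1\) in the \(P_n\)-pairing, so it vanishes for \(m+1<n\), leaving \(\nu\in\{n-1,n,n+1\}\). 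The condition \(\lambda_{2,0}=0\) comes from \(n=1\), \(m=0\): there \(P_0'=0\) kills the pairing.

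\emph{(i)$\Rightarrow$(ii).} Pair \eqref{eq:differential-structure-general} with \(u_0\). The right side has \(u_0\)-pairing equal to \(-n\lambda_{n+1,0}\)-type terms; these vanish for \(n\ge2\) by orthogonality and for \(n=1\) because \(\lambda_{2,0}=0\). Therefore
\[
\langle (x^2u_0)''+(x(x-2)u_0)',P_n\rangle=0,\qquad n\ge1 .
\]
Pairing with \(P_0\) also gives zero, since the functional is a derivative. So \(\bigl((x^2u_0)'+x(x-2)u_0\bigr)'=0\), and hence \((x^2u_0)'+x(x-2)u_0=0\) because a functional with vanishing derivative is zero. (Indeed \(\langle w',x^{k+1}\rangle=-(k+1)\langle w,x^k\rangle\).) This is the cleanest step.

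\emph{Explicit coefficients.} For \(P_n=B_n(x;\beta)\), compute using the Laguerre connection \eqref{eq:B-L-connection}. Write \(x^2B_n''-x(x-2)B_n'=x\bigl(xB_n''-(x-2)B_n'\bigr)\). Then \eqref{eq:Lag-ODE} with \(\alpha=1\) gives \(xD^2\hat l_k(x;1)-(x-2)D\hat l_k(x;1)=-k\hat l_k(x;1)\), so the left side equals
\[
-x\Bigl(n\hat l_n(x;1)+(n-1)\tfrac{n(n+\beta+1)}{n+\beta}\hat l_{n-1}(x;1)\Bigr).
\]
Applying \eqref{eq:L-B-connection} converts \(x\hat l_k(x;1)\) into \(B_{k+1}+\frac{(k+1)(k+\beta)}{k+\beta+1}B_k\). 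Collecting terms gives \(\lambda_{n+1,n}\) and \(\lambda_{n+1,n-1}\) as stated. The main obstacle is only bookkeeping, in (ii)$\Rightarrow$(i) and in this final coefficient matching.
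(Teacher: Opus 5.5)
Your proof is correct, but it closes the chain of implications differently from the paper. The paper proves (i)$\Rightarrow$(ii)$\Rightarrow$(iii)$\Rightarrow$(i). Its last step is exactly the Laguerre computation you use for the coefficients: with $P_n=B_n(x;\beta)$, applying the Laguerre equation with $\alpha=1$ to the connection formula and returning to the $B$-basis proves (i) and produces the $\lambda_{n+1,\nu}(\beta)$ at the same time.

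Your steps (i)$\Rightarrow$(ii) and (ii)$\Rightarrow$(iii) are the paper's. You conclude from $\bigl((x^2u_0)'+x(x-2)u_0\bigr)'=0$ rather than from $\{P_n'\}_{n\ge1}$ being a basis, which amounts to the same thing. You replace (iii)$\Rightarrow$(i) by two steps: a direct verification that $\Sfun_\beta$ satisfies the Pearson equation, deduced from the Laguerre one, and then the standard semiclassical argument (ii)$\Rightarrow$(i). That argument rests on the symmetric identity $\langle u_0,P_m(x^2P_n''-x(x-2)P_n')\rangle=-\langle x^2u_0,P_m'P_n'\rangle$.

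Your route gives an intrinsic proof of (i)$\Leftrightarrow$(ii) for every regular $u_0$, without the hypothesis $(u_0)_1\ne1$, so it also covers $u_0=\Lag^0$. The hypothesis is then used only where it is genuinely needed, in (ii)$\Rightarrow$(iii). The three-term shape and $\lambda_{2,0}=0$ also appear structurally, from orthogonality and $P_0'=0$. In the paper they are read off the explicit formula, where $\lambda_{2,0}(\beta)=0$ because of the factor $n-1$.

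The paper's route is more economical: one computation both proves (iii)$\Rightarrow$(i) and derives the coefficients. In your write-up that computation is logically separate from the equivalence.
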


\begin{proof}
Assume \eqref{eq:differential-structure-general}.  Pairing with \(u_0\), the right-hand side vanishes for every \(n\ge1\); for \(n=1\) this is exactly the role of the condition \(\lambda_{2,0}=0\).  Therefore
\[
   \langle u_0,x^2P_n''-x(x-2)P_n'\rangle=0,
   \qquad n\ge1.
\]
By duality this is
\[
   \left\langle (x^2u_0)' +x(x-2)u_0,P_n'\right\rangle=0,
   \qquad n\ge1.
\]
The sequence \(\{P_n'\}_{n\ge1}\) is a basis of \(\Pcal\), hence \eqref{eq:Sbeta-Pearson-D} follows.

Conversely, if \eqref{eq:Sbeta-Pearson-D} holds, then pairing it with \(x^k\) gives
\[
   (u_0)_{k+2}=(k+2)(u_0)_{k+1},
   \qquad k\ge0.
\]
Thus, with \(a=(u_0)_1\),
\[
   (u_0)_0=1,
   \qquad
   (u_0)_n=a n!,\quad n\ge1.
\]
If \(a=0\), then \(u_0=\delta\), which is not regular. Since \(a\ne1\) by
assumption,
\[
   u_0=a\Lag^0+(1-a)\delta=\Sfun_{a/(1-a)}.
\]
Thus (2) implies (3). Conversely, if \(u_0=\Sfun_\beta\), uniqueness of the
monic orthogonal sequence gives \(P_n=B_n(x;\beta)\). Applying the Laguerre
equation \eqref{eq:Lag-ODE} with \(\alpha=1\) to the connection formula
\eqref{eq:B-L-connection}, multiplying by \(x\), and using
\eqref{eq:L-B-connection} to return to the \(B\)-basis yields
\eqref{eq:B-differential-equation}; in particular, (1) holds. This completes
the equivalence and gives the displayed coefficients.
\end{proof}

We close the section with the generating function.  Let
\[
   \Pi_\beta(x,t)=\sum_{n\ge0}\frac{B_n(x;\beta)}{n!}t^n.
\]

\begin{proposition}\label{prop:B-generating-function}
For \(\beta>-1\), \(\beta\ne0\), and \(|t|<1\),
\[
   \Pi_\beta(x,t)
   =
   \frac{\exp\left(\frac{xt}{1+t}\right)}{1+t}
   +
   t^{-\beta}
   \int_0^t
   y^\beta
   \frac{\exp\left(\frac{xy}{1+y}\right)}{(1+y)^2}
   \,dy.
\]
Moreover,
\[
   \Lop_x\Pi_\beta(x,t)=t\Pi_{\beta+1}(x,t).
\]
\end{proposition}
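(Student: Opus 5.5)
We will compute the generating function from the connection formula \eqref{eq:B-L-connection} and the Laguerre exponential generating function. Write
\[
G_1(x,t)=\sum_{n\ge0}\hat l_n(x;1)\frac{t^n}{n!}=(1+t)^{-2}\exp\Bigl(\frac{xt}{1+t}\Bigr),
\]
which is the case \(\alpha=1\), \(c=0\) of the generating function identified in the proof of Theorem~\ref{thm:Lag-Appell-characterization}. The factor in \eqref{eq:B-L-connection} splits as
\[
\frac{n(n+\beta+1)}{n+\beta}=n\Bigl(1+\frac{1}{n+\beta}\Bigr).
\]
Hence
\[
\Pi_\beta=\sum_n\hat l_n(x;1)\frac{t^n}{n!}+\sum_{n\ge1}\hat l_{n-1}(x;1)\frac{t^n}{(n-1)!}+\sum_{n\ge1}\frac{1}{n+\beta}\hat l_{n-1}(x;1)\frac{t^n}{(n-1)!}.
\]
The first two sums give \((1+t)G_1=(1+t)^{-1}\exp(xt/(1+t))\), which is the first term of the claimed formula.

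For the third sum we use, for \(\beta>-1\) and \(n\ge1\) (so that \(n+\beta>0\)),
\[
\frac{t^n}{n+\beta}=t^{-\beta}\int_0^t y^{n+\beta-1}\,dy.
\]
Exchanging sum and integral turns the third sum into
\[
t^{-\beta}\int_0^t y^{\beta}\sum_{m\ge0}\hat l_m(x;1)\frac{y^m}{m!}\,dy=t^{-\beta}\int_0^t y^\beta G_1(x,y)\,dy,
\]
which is exactly the integral term. This step is the main analytic point. For fixed \(x\) the series for \(G_1(x,y)\) converges locally uniformly for \(|y|<1\), since \(G_1\) is analytic in the unit disc. This justifies termwise integration along the segment \([0,t]\), with the branch of \(t^{-\beta}y^\beta\) taken so that \((y/t)^\beta\) is the principal power. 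The condition \(\beta>-1\) is what makes \(y^{\beta}\) integrable at \(0\); it is needed for the \(m=0\) term, where the exponent is \(n+\beta-1=\beta\).

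For the second claim we avoid the closed form and act coefficientwise, since \(\Lop_x\) acts on each polynomial coefficient. By Theorem~\ref{thm:B-L-Hahn}, \(\Lop B_0=0\) and \(\Lop B_{n+1}(x;\beta)=(n+1)B_n(x;\beta+1)\). Therefore
\[
\Lop_x\Pi_\beta=\sum_{n\ge0}(n+1)B_n(x;\beta+1)\frac{t^{n+1}}{(n+1)!}=t\,\Pi_{\beta+1}(x,t).
\]
This holds as formal power series in \(t\), hence also as analytic functions for \(|t|<1\). As an optional consistency check, one can apply \(\Lop_x\) to the closed form. Since \(\Lop\) acts on \(e^{xs}\) as multiplication by \(s/(1-s)\), with \(s=t/(1+t)\) this factor equals \(t\). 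The first term then becomes \(t(1+t)^{-1}e^{xt/(1+t)}\), and the integral term becomes \(t^{-\beta}\int_0^t y^{\beta+1}\cdots\,dy\). Together these give \(t\,\Pi_{\beta+1}\), after writing \(t^{-\beta}=t\cdot t^{-(\beta+1)}\).
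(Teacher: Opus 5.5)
Your proposal is correct and follows essentially the same route as the paper. It uses the same splitting of the connection coefficient $\frac{n(n+\beta+1)}{n+\beta}$, which after the shift $m=n-1$ is the paper's $\frac{m+\beta+2}{m+\beta+1}=1+\frac{1}{m+\beta+1}$, the same integral representation of $1/(m+\beta+1)$ against the Laguerre generating function, and the same coefficientwise application of Theorem~\ref{thm:B-L-Hahn} for the $\Lop$-relation. Your justification of the termwise integration, via locally uniform convergence of the Laguerre series for $|y|<1$ and integrability of $y^\beta$ at $0$ when $\beta>-1$, fills in a detail the paper leaves implicit.
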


\begin{proof}
The classical exponential generating function of the monic Laguerre sequence is
\[
   F(x,t)
   :=
   \sum_{n\ge0}\hat l_n(x;1)\frac{t^n}{n!}
   =
   \frac{\exp\left(\frac{xt}{1+t}\right)}{(1+t)^2}.
\]
Using the connection formula
\[
   B_n(x;\beta)
   =
   \hat l_n(x;1)
   +
   \frac{n(n+\beta+1)}{n+\beta}\hat l_{n-1}(x;1),
\]
and putting \(m=n-1\) in the second term, we obtain
\[
\begin{aligned}
   \Pi_\beta(x,t)
   &=
   F(x,t)
   +
   t\sum_{m\ge0}
   \frac{m+\beta+2}{m+\beta+1}
   \hat l_m(x;1)\frac{t^m}{m!}  \\
   &=
   (1+t)F(x,t)
   +
   t\sum_{m\ge0}
   \frac{1}{m+\beta+1}
   \hat l_m(x;1)\frac{t^m}{m!}.
\end{aligned}
\]
For \(\beta>-1\), the last sum has the integral representation
\[
   \sum_{m\ge0}
   \frac{1}{m+\beta+1}
   \hat l_m(x;1)\frac{t^m}{m!}
   =
   t^{-\beta-1}
   \int_0^t
   y^\beta
   \frac{\exp\left(\frac{xy}{1+y}\right)}{(1+y)^2}
   \,dy.
\]
Therefore
\[
   \Pi_\beta(x,t)
   =
   \frac{\exp\left(\frac{xt}{1+t}\right)}{1+t}
   +
   t^{-\beta}
   \int_0^t
   y^\beta
   \frac{\exp\left(\frac{xy}{1+y}\right)}{(1+y)^2}
   \,dy.
\]
This proves the generating function. Finally, applying \(\Lop_x\) term by term
and using Theorem~\ref{thm:B-L-Hahn}, we get
\[
   \Lop_x\Pi_\beta(x,t)=t\Pi_{\beta+1}(x,t).
\]
\end{proof}

\section{Concluding remarks}

The resolvent delta operator
\[
   \Lop=D(I-D)^{-1}
\]
exhibits two distinct and rigorously identifiable mechanisms of orthogonality
under lowering. In the Appell regime the situation is completely rigid:
Theorem~\ref{thm:Lag-Appell-characterization} shows that the translated
Laguerre sequences are exactly the monic orthogonal \(\Lop\)-Appell sequences.
Beyond the Appell regime, Theorem~\ref{thm:B-L-Hahn} shows that the diagonal
family is stable under \(\Lop\) through the exact parameter shift
\(\beta\mapsto\beta+1\).

The boundary characterization of
Theorem~\ref{thm:point-classification} provides a complementary form of
rigidity. A single pointwise compatibility relation between evaluation and
\(\Lop\)-lowering determines the normalized orthogonality functional and
recovers precisely the translated Laguerre functional at parameter zero or a
translated diagonal functional. The connection formulae, finite structure
relations, semiclassical Pearson equation and generating function then show
that this phenomenon is simultaneously algebraic, functional and
differential.

These results give a self-contained Hahn-type theory for the two mechanisms
treated in the paper.
They also provide a concrete bridge between finite operator calculus and the
semiclassical theory of orthogonal polynomials: the resolvent
\((I-D)^{-1}\) converts classical derivative identities into exact
orthogonality-preserving lowering relations. In this sense, the diagonal
family is not merely a point-mass perturbation of Laguerre orthogonality; it is
an operator-compatible deformation whose parameter evolution is encoded
directly by \(\Lop\).

\bibliographystyle{plainnat}
\bibliography{Hahn_Resolvent_References}

\section*{Statements and Declarations}

\subsection*{Funding}
The first author acknowledges support from the Étoiles Montantes en
Pays de la Loire project, funded by the Pays de la Loire Region.
The second author acknowledges institutional support from the Faculty
of Sciences of Gabès and the Faculty of Sciences of Sfax.

\subsection*{Competing Interests}
The authors have no relevant financial or non-financial interests to
disclose.

\subsection*{Author Contributions}
Both authors contributed to the conception and development of the
results, the preparation of the manuscript, and its revision.
Both authors read and approved the final manuscript.

\subsection*{Data Availability}
No datasets were generated or analysed during the current study.

\end{document}